\documentclass[12pt,reqno]{amsart}
\usepackage{cmap,mathtools}
\usepackage[T1]{fontenc}
\usepackage[utf8]{inputenc}
\usepackage[english]{babel}
\usepackage{amsfonts,amssymb,amsmath}
\usepackage{footnote,bigints}
\usepackage{array}
\usepackage{adjustbox}
\usepackage{graphicx}
\usepackage{caption}
\usepackage{subcaption}
 \usepackage{enumitem}

\usepackage[pagewise]{lineno}

\DeclareUnicodeCharacter{0308}{\"{}}

\allowdisplaybreaks
\usepackage{graphicx}
\usepackage{epstopdf}
\usepackage{a4wide}
\usepackage{xcolor}
\usepackage[colorlinks=true,linktocpage,pdfpagelabels,
bookmarksnumbered,bookmarksopen]{hyperref}
\definecolor{ForestGreen}{rgb}{0.1,0.6,0.05}
\definecolor{EgyptBlue}{rgb}{0.063,0.1,0.6}
\hypersetup{
	colorlinks=true,
	linkcolor=EgyptBlue,         
	citecolor=ForestGreen,
	urlcolor=olive
}

\usepackage[hyperpageref]{backref}

\newtheorem{theorem}{Theorem}[section]
\newtheorem{proposition}{Proposition}[section]
\newtheorem{definition}{Definition}[section]
\newtheorem{lemma}{Lemma}[section]
\newtheorem{remark}{Remark}[section]
\newtheorem{cor}{Corollary}[section]

\numberwithin{equation}{section}
\numberwithin{theorem}{section}
\numberwithin{equation}{section}
\numberwithin{theorem}{section}

\usepackage{ulem}
\usepackage{xcolor}
\usepackage[colorlinks=true,linktocpage,pdfpagelabels,
bookmarksnumbered,bookmarksopen]{hyperref}
\definecolor{ForestGreen}{rgb}{0.1,0.6,0.05}
\definecolor{EgyptBlue}{rgb}{0.063,0.1,0.6}
\hypersetup{
	colorlinks=true,
	linkcolor=EgyptBlue,         
	citecolor=ForestGreen,
	urlcolor=olive
}

\subjclass[2010]{Primary  58J50; Secondary 35P15}
\usepackage[hyperpageref]{backref}
\usepackage[foot]{amsaddr}

\DeclareUnicodeCharacter{2212}{-}

\title [Mixed Steklov Neumann eigenvalues]{ Sharp bounds and geometric properties of the first non trivial Steklov Neumann Eigenvalue  }

\keywords{Steklov--Neumann Eigenvalues, Doubly connected domains, Nodal domains, Symmetries, Star Shaped domain}
\author{Sagar Basak, Gloria Paoli, Rossano Sannipoli, Sheela Verma}

\newcommand{\Addresses}{
  \bigskip 
   \textit{E-mail address}, S.~Basak: \texttt{sagarbasak.rs.mat22@itbhu.ac.in}
   \medskip
\textsc{ Department of Mathematical Sciences, Indian Institute of Technology (BHU), Varanasi, India}
 
  \textit{E-mail address}, G.~Paoli: \texttt{gloria.paoli@unina.it} 
 \medskip
  \textsc{Dipartimento di Matematica e Applicazioni ``R. Caccioppoli'', Universit\`a degli studi di Napoli Federico II, Via Cintia, Complesso Universitario Monte S. Angelo, 80126 Napoli, Italy.}

  \textit{E-mail address}, R.~Sannipoli: \texttt{rossano.sannipoli@fjfi.cvut.cz} 
     \medskip 
\textsc{Department of Mathematics, Faculty of Nuclear Sciences and Physical Engineering, Czech Technical University in Prague, Trojanova 13, 120 00, Prague, Czech Republic.}

\textit{E-mail address}, S.~Verma: \texttt{sheela.mat@iitbhu.ac.in}
 \medskip
 \textsc{ Department of Mathematical Sciences, Indian Institute of Technology (BHU), Varanasi, India}\par\nopagebreak 

}

\begin{document}

\begin{abstract}
In this article, we study the mixed Steklov--Neumann eigenvalue problem on doubly connected domains. First, we show that among all doubly connected domains in $\mathbb{R}^n$ of the form $B_{R_2}\setminus \overline{B_{R_1}}$, where $B_{R_1}$ and $B_{R_2}$ are open balls of fixed radii satisfying $\overline{B_{R_1}} \subset B_{R_2}$, the first non-zero Steklov--Neumann eigenvalue attains its maximal value when the balls are concentric. Next, we establish bounds for the first non-zero Steklov--Neumann eigenvalue on a doubly connected star-shaped domain contained in a hypersurface equipped with a revolution-type metric. We also derive the asymptotic behavior of the first non-zero Steklov--Neumann eigenvalue on a bounded domain with a spherical hole in $\mathbb{R}^n$ as the radius of the hole approaches zero. Finally, we study the number of nodal domains of the eigenfunction corresponding to the first non zero Steklov--Neumann eigenvalue on a bounded domain in $\mathbb{R}^n$ having a spherical hole. 
\\ \\
\noindent\textsc{\textbf{MSC 2020}:} 35B40, 35J25, 35P15. \\
\textsc{\textbf{Keywords}}:  Laplacian eigenvalue, Steklov--Neumann boundary conditions, Isoperimetric inequalities, Star shaped domains.
\end{abstract}
\maketitle

\section{Introduction}


 Let $(M, g)$ be a Riemannian manifold of
dimension $n$, and let $\Omega \subset M$ be a bounded doubly connected
domain with Lipschitz boundary
$\partial \Omega = \Gamma_1 \cup \Gamma_2$, where $\Gamma_1$ and
$\Gamma_2$ are disjoint. We consider the following Steklov-Neumann eigenvalue problem for the Laplacian
\begin{equation}\label{SN Problem}
\begin{cases}
\Delta u = 0 & \text{in } \Omega, \\
\dfrac{\partial u}{\partial \nu} = 0 & \text{on } \Gamma_1, \\
\dfrac{\partial u}{\partial \nu} = \mu u & \text{on } \Gamma_2,
\end{cases}
\end{equation}
where $ \nu$ denotes the unit outward normal to the boundary $\partial\Omega$.

The weak formulation of \eqref{SN Problem} is given by
\begin{equation}\label{weak formulation SN}
\int_{\Omega}  \nabla u \nabla v \, dV
= \mu \int_{\Gamma_2} u v \, dS,
\quad \text{for all } v \in H^1(\Omega).
\end{equation}

An equivalent spectral interpretation can be formulated in terms of the
Dirichlet-to-Neumann operator on $\Gamma_2$. For $F \in L^2(\Gamma_2)$,
let $\widetilde{F}$ denote its harmonic extension to $\Omega$ satisfying
the Neumann condition on $\Gamma_1$. The Dirichlet-to-Neumann map, that is 
\[
D : L^2(\Gamma_2) \to L^2(\Gamma_2),
\quad F \mapsto \frac{\partial \widetilde{F}}{\partial \nu},
\]
is compact, positive, and self-adjoint
(see \cite[Chapter~III]{bandle1980isoperimetric}). Consequently, its
spectrum is discrete and can be arranged as
\[
0 = \mu_0(\Omega) < \mu_1(\Omega) \leq \mu_2(\Omega)
\leq \cdots \nearrow \infty.
\]
The eigenfunction corresponding to $\mu_0(\Omega)$ is constant. The first
non-zero eigenvalue $\mu_1(\Omega)$ admits the following variational characterization
\begin{equation}\label{characterization}
\mu_1(\Omega) =
\inf \left\{
\frac{\displaystyle \int_{\Omega} |\nabla u|^2 \, dV}
{\displaystyle \int_{\Gamma_2} u^2 \, dS}
:\;
u \in H^1(\Omega), \quad
\int_{\Gamma_2} u \, dS = 0
\right\}.
\end{equation}

\subsection{State of the Art}

Spectral problems on doubly connected domains have been widely studied under
various boundary conditions. One of the earliest contributions is due to P{\'o}lya~\cite{polya1960two}, who derived upper bounds for the first Dirichlet eigenvalue of planar ring-shaped domains. In 
\cite{kesavan2003two} it is proved that, among annular domains in $\mathbb{R}^n$
obtained by translating the inner ball, the first Dirichlet eigenvalue is maximized when the balls are concentric. This optimality property was later
extended to space forms in ~\cite{anisa2005two}. For mixed Dirichlet--Neumann boundary conditions, Payne and Weinberger
\cite{payne1961some} established an analogous extremal result. The authors showed
that the concentric annulus maximizes the first Dirichlet--Neumann eigenvalue among multiply connected planar domains with prescribed geometric
constraints.

Mixed Steklov-type problems have also attracted considerable attention.
In ~\cite{verma2020eigenvalue}, the authors proved that concentric
annuli maximize the first eigenvalue of the Steklov--Dirichlet problem among
domains of fixed volume. An alternative proof and further extensions,
including the classical Steklov problem, were later provided
in~\cite{ftouhi2022place}. Bounds and isoperimetric inequalities for higher
Steklov--Dirichlet eigenvalues on perforated domains were recently obtained
in~\cite{Basak_Chorwadwala_Verma_2025, sannipoli2025estimates}. Eigenvalue estimates for doubly connected star-shaped domains have been
studied in various geometric settings. In~\cite{gavitone2023isoperimetric},
upper bounds for the first Steklov--Dirichlet eigenvalue were derived,
showing that the corresponding concentric annulus provides an extremal
configuration. These results were later extended to non-Euclidean space
forms in~\cite{Basak_Chorwadwala_Verma_2025}. Using related techniques,
lower bounds for the first Steklov--Robin eigenvalue were established
in~\cite{gavitone2023steklov}. The asymptotic behavior of Steklov-type eigenvalues on perforated domains has
also been investigated:  in~\cite{sannipoli2025estimates}, it was shown that,
as the radius of a spherical hole tends to zero, the first
Steklov--Dirichlet eigenvalue converges to zero, while the second eigenvalue
converges to the first non-trivial Steklov eigenvalue of the limiting
domain.
Other reference for mixed
Steklov--Dirichlet problems are the following 
\cite{ gavitone2024monotonicity,
hong2020shape, GloriaPaoli2021C, verma2020eigenvalue}.

For eigenfunctions corresponding to the Steklov eigenvalues on a bounded simply connected domain in $\mathbb{R}^2$, the authors in \cite{KS1969} studied the structure of their nodal sets. The authors proved that the nodal set of an eigenfunction associated with the $k$-th Steklov eigenvalue on a bounded simply connected domain $\Omega \subset \mathbb{R}^2$ divides $\Omega$ into at most $k$ subdomains, and moreover, that no nodal line forms a closed curve.
In \cite{AM1994}, the authors determined the number of nodal domains of an eigenfunction corresponding to the $k$-th Steklov eigenvalue on a simply connected bounded domain in $\mathbb{R}^n$. More recently, in \cite{sannipoli2025estimates}, the author generalized this result to the mixed Steklov--Dirichlet eigenvalue problem on domains with a hole in $\mathbb{R}^n$.

 Other types of mixed boundary conditions have been investigated in recent years: for instance, mixed Dirichlet--Neumann
problem \cite{anoop2022szego1, anoop2020reverse, anoop2021shape}, and the mixed
Robin--Neumann problem
\cite{cito2025optimality, cito2025stability, paoli2020sharp}. These studies highlight the delicate dependence of eigenvalues on the geometry of the
domain and reveal the presence of extremal configurations analogous to those
of classical Dirichlet or Steklov problems.

The mixed Steklov--Neumann has been only partially studied in 
\cite{MR2758960, basak2024bounds, hassannezhad2020eigenvalue,
kao2025extremal} and still presents several fundamental open questions. In particular, the ones concerning optimal shapes, sharp bounds, and asymptotic behavior. The aim of the present paper is to address these problems.  
\subsection{Main Results}

We now describe the main contributions of this work. Our first result concerns
a shape optimization problem for particular doubly connected domains in the Euclidean setting.
More precisely, we determine the configuration that maximizes the first
non-zero Steklov--Neumann eigenvalue among eccentric annuli with prescribed
radii. This provides the Steklov--Neumann counterpart of classical extremal
results known for Dirichlet, Dirichlet--Neumann, and Steklov--Dirichlet
problems. To state the first results, we need to introduce the following notation. Let $R>0$,  we define $B_{R}(d) = y_d + B_{R}$, where $y_d = (0, 0, \dots, d)$, the ball centered at the point $y_d$ of radius $R>0$. If $d=0$, we will denote the ball by $B_R$.
\begin{theorem} \label{theorem: optimal}
Let $R_2>R_1>0$ and let  $\Omega_d = B_{R_2}(d) \setminus \overline{B_{R_1}},$
 where $B_{R_1}$ and $B_{R_2}(d)$ are such that $\overline{B_{R_1}} \subset B_{R_2}(d)$. Then, 
\begin{equation}
     \mu_1(\Omega_d)\leq \mu_1(\Omega_0).
 \end{equation}
 
\end{theorem}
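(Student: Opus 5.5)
We take $\Gamma_2=\partial B_{R_2}(d)$ as the Steklov boundary and $\Gamma_1=\partial B_{R_1}$ as the Neumann boundary. If the roles were reversed, the same scheme would be run with the roles of the spheres exchanged. The plan is a test-function argument in the spirit of the Steklov--Dirichlet results of \cite{verma2020eigenvalue,ftouhi2022place}: the test functions are the first eigenfunctions of the concentric annulus, but centred at the centre of the \emph{outer} ball. Translating coordinates, we may assume $\Omega_d=B_{R_2}\setminus\overline{B_{R_1}(-d)}$, with the outer sphere centred at the origin and $r=|x|$.

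\emph{Step 1: the concentric annulus.} Separate variables in $\Omega_0$, writing $u=f_l(r)Y_l(\theta)$ with $Y_l$ a spherical harmonic of degree $l$. Then $f_l=r^l+a_l r^{2-n-l}$, with $a_l$ fixed by $f_l'(R_1)=0$, and the eigenvalue is $f_l'(R_2)/f_l(R_2)$. For $l=0$ only constants survive. We check that $l\mapsto f_l'(R_2)/f_l(R_2)$ is increasing for $l\ge1$; this is an elementary computation comparing the two explicit quotients. Hence $\mu_1(\Omega_0)=f'(R_2)/f(R_2)$ with
\[
f(r)=r+a\,r^{1-n},\qquad a=\frac{R_1^n}{n-1},
\]
and the eigenfunctions are $f(r)x_i/r$ for $i=1,\dots,n$.

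\emph{Step 2: test functions on $\Omega_d$.} Extend $f$ to $[0,R_2]$ by setting $g=f$ on $[R_1,R_2]$ and $g(r)=f(R_1)\,r/R_1$ on $[0,R_1]$. Then put $v_i=g(r)x_i/r$; these are Lipschitz on $B_{R_2}$. Each $v_i$ has zero mean on $\partial B_{R_2}$ by oddness, so it is admissible in \eqref{characterization}. Applying the characterization to each $v_i$ and summing over $i$ gives
\[
\mu_1(\Omega_d)\le\frac{\int_{\Omega_d}F(r)\,dx}{g(R_2)^2|\partial B_{R_2}|},\qquad F=g'^2+(n-1)\frac{g^2}{r^2}.
\]
The denominator does not depend on $d$. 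When $d=0$ the right-hand side equals $\mu_1(\Omega_0)$, because the $v_i$ are then the eigenfunctions from Step 1.

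\emph{Step 3: monotonicity of $F$, the main step.} Since $\int_{\Omega_d}F=\int_{B_{R_2}}F-\int_{B_{R_1}(-d)}F$, it suffices to show $\int_{B_{R_1}(-d)}F\ge\int_{B_{R_1}}F$. This holds if $F$ is radially nonincreasing, because translating the ball away from the origin moves its mass to larger radii. On $[0,R_1)$, $F\equiv n f(R_1)^2/R_1^2$ is constant. At $R_1$ the function jumps down to $(n-1)f(R_1)^2/R_1^2$, since $f'(R_1)=0$. For $r>R_1$, set $s=a r^{-n}\in(0,1/(n-1)]$, which is decreasing in $r$. Then $F=(1-(n-1)s)^2+(n-1)(1+s)^2$, and
\[
\frac{dF}{ds}=2n(n-1)s\ge0 .
\]
So $F$ is nonincreasing in $r$ on the whole interval, with a downward jump at $R_1$. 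This yields $\int_{\Omega_d}F\le\int_{\Omega_0}F$, and therefore $\mu_1(\Omega_d)\le\mu_1(\Omega_0)$.

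The step I expect to need the most care is Step 1, namely the ordering in $l$ that identifies $\mu_1(\Omega_0)$ with the $l=1$ mode. The monotonicity in Step 3 reduces to the one-line computation above.
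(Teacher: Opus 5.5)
\textbf{There is a genuine gap in Step 3: the rearrangement inequality goes the wrong way, and this breaks the proof.} For a radially nonincreasing $F$, moving mass to larger radii \emph{decreases} its integral. By the Hardy--Littlewood (bathtub) principle, the centred ball maximises $\int_B F$ among balls of radius $R_1$ inside $B_{R_2}$. So $\int_{B_{R_1}(-d)}F\le\int_{B_{R_1}}F$, which gives $\int_{\Omega_d}F\ge\int_{\Omega_0}F$, the opposite of what you need. You would need $F$ radially nondecreasing. Concretely,
\[
\int_{\Omega_d}F-\int_{\Omega_0}F=\int_{B_{R_1}\setminus B_{R_1}(-d)}F-\int_{B_{R_1}(-d)\setminus B_{R_1}}F .
\]
The two sets have equal measure. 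The first lies in $\{r<R_1\}$, where $F\equiv n^3/(n-1)^2$. The second lies in $\{R_1<r<R_2\}$, where $F<n^2/(n-1)$. Hence the difference is at least $\frac{n^2}{(n-1)^2}\,|B_{R_1}\setminus B_{R_1}(-d)|>0$ for every $d>0$. Your Rayleigh-quotient bound is therefore strictly larger than $\mu_1(\Omega_0)$ and proves nothing. Your own computation $dF/ds=2n(n-1)s\ge 0$ is exactly what shows this.

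Changing the extension $g$ inside $B_{R_1}$ does not help. On $(R_1,R_2)$ the density $F$ is fixed by $f$ and strictly decreasing. For any extension $g$ that is $C^1$ up to $R_1$ with $g(R_1)=f(R_1)$, one has $F(R_1^-)=g'(R_1)^2+F(R_1^+)\ge F(R_1^+)$, because $f'(R_1)=0$. Even when equality holds, $(n-1)g^2/r^2$ increases as $r$ decreases below $R_1$. So for small $d$ the displaced hole still swaps high-density points for low-density ones. Steps 1 and 2 are fine, including the summation over $i$, which correctly produces the radial density $F$ and a $d$-independent denominator. The step you flagged as delicate is not the problem; the choice of centre is.

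\textbf{What works instead.} The paper centres the test function at the Neumann hole. It keeps $B_{R_1}$ at the origin, translates the outer ball, and uses the single $\Omega_0$-eigenfunction $f=u_n^{n-1}$, a component orthogonal to the translation. Only the components $x_i/r$ with $i\le n-1$ have zero mean on $\partial B_{R_2}(d)$, by reflection symmetry. With this centring the decreasing energy density works for you. Translating the outer ball replaces points of radius below $R_2$ by points of radius above $R_2$, and the paper shows $\int_{\Omega_d}|\nabla f|^2\le\int_{\Omega_0}|\nabla f|^2$ via $A_1(d)=A_1(0)$, $A_2(d)=0$ and $A_3(d)\ge A_3(0)$.

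The price is what your centring was designed to avoid. The denominator $\int_{\partial B_{R_2}(d)}f^2\,dS$ now depends on $d$ and must be shown to be nondecreasing; this is the lemma on $V_1,V_2,V_3$. Also, the $x_n$-component is not admissible, so you cannot sum over all $i$ to get a radial density. That is why the explicit angular integrals borrowed from Ftouhi are needed.
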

\noindent Theorem~\ref{theorem: optimal} shows that the concentric annulus maximizes the first non-trivial Steklov-Neumann eigenvalue in the class of all eccentric annuli. Despite the different spectral
nature of the problem, the extremal configuration coincides with that
observed for other boundary conditions, showing that the symmetry plays a decisive role
also in the Steklov--Neumann framework.

Our second result addresses eigenvalue estimates on more general geometric
settings. Motivated by recent advances on star-shaped domains for mixed
Steklov-type problems, see \cite{basak2024bounds}, we derive explicit lower and upper bounds for the
first Steklov--Neumann eigenvalue on doubly connected star-shaped domains
embedded in hypersurfaces in the Riemannian manifold $M$, endowed with metrics of revolution. 
Let $p\in M$ and  $\tilde{\Omega}_{\mathrm{out}} \subset M$ be a star-shaped domain with respect to $p$. Let 
$B_{R_1}$ denote the geodesic ball in $M$ of radius $R_1$ centered at $p$, such that $B_{R_1} \subset \tilde{\Omega}_{\mathrm{out}}$, and let us
define $\tilde{\Omega} = \tilde{\Omega}_{\mathrm{out}} \setminus \overline{B_{R_1}}.$
Let $B_{R_M}$ be the smallest geodesic ball in $M$ centered at $p$ with radius $R_M$ that contains 
$\tilde{\Omega}_{\mathrm{out}}$, and let $B_{R_m}$ be the largest geodesic ball in $M$ centered at $p$ with radius 
$R_m$ that is contained in $\tilde{\Omega}_{\mathrm{out}}$. With this setup, we obtain the following result.

\begin{theorem}\label{thm:upper-lower}
Let $M$ be a hypersurface of revolution  with  metric $g$ and let $\tilde{\Omega}$, $B_{R_1}$, $B_{R_M}$, and $B_{R_m}$ be defined as above. Then there exist two explicit and postive constants $C_i=C_i(g,R_m,R_M)$, $i=1,2$, such that 
\begin{align*}
C_1\;
\mu_1\!\left(B_{R_m}\setminus \overline{B_{R_1}}\right)
\;\le\; \mu_1(\tilde{\Omega}) 
\;\le\;
C_2\;
\mu_1\!\left(B_{R_M}\setminus \overline{B_{R_1}}\right).
\end{align*}
\end{theorem}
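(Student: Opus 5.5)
We work in geodesic polar coordinates centred at $p$, where the revolution-type metric reads $g = dr^2 + h(r)^2 g_{\mathbb{S}^{n-1}}$. The volume element is $dV = h(r)^{n-1}\,dr\,d\sigma$. Star-shapedness with respect to $p$ lets us write
\[
\tilde{\Omega}=\{(r,\theta): R_1<r<\rho(\theta),\ \theta\in\mathbb{S}^{n-1}\},
\]
with $\rho$ Lipschitz and $R_m\le\rho\le R_M$. We take $\Gamma_1=\partial\tilde{\Omega}_{\mathrm{out}}$ (Neumann) and $\Gamma_2=\partial B_{R_1}$ (Steklov); this is the reading under which the comparison annuli share the same Steklov boundary. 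The plan is to build explicit bi-Lipschitz-type maps between $\tilde\Omega$ and the annuli $A_m=B_{R_m}\setminus\overline{B_{R_1}}$ and $A_M=B_{R_M}\setminus\overline{B_{R_1}}$. These maps fix the sphere $\partial B_{R_1}$ pointwise, so the denominator $\int_{\Gamma_2}u^2\,dS$ and the constraint $\int_{\Gamma_2}u\,dS=0$ in \eqref{characterization} are unchanged under pullback. It then only remains to compare Dirichlet energies with explicit constants.

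\textbf{Upper bound.} Let $v$ be a first eigenfunction on $A_M$. By separation of variables on the annulus, $v(r,\theta)=f(r)Y(\theta)$, where $Y$ is a first spherical harmonic and $f$ is radial. Define the transplanted function $u(r,\theta)=v\big(\phi_\theta(r),\theta\big)$ on $\tilde\Omega$, with
\[
\phi_\theta(r)=R_1+(R_M-R_1)\,\frac{r-R_1}{\rho(\theta)-R_1},
\]
which maps $[R_1,\rho(\theta)]$ onto $[R_1,R_M]$. Since $\phi_\theta(R_1)=R_1$, the restriction of $u$ to $\Gamma_2$ coincides with that of $v$, so $u$ is admissible and has the same denominator. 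For the energy we write
\[
|\nabla u|^2=(\partial_r u)^2+h(r)^{-2}|\nabla_\theta u|^2,
\]
and compute $\partial_r u=\phi_\theta'\,v_r$ and $\nabla_\theta u=\nabla_\theta v+v_r\,\nabla_\theta\phi_\theta$. Here $\phi_\theta'=(R_M-R_1)/(\rho-R_1)$ lies in $\big[1,(R_M-R_1)/(R_m-R_1)\big]$, and $|\nabla_\theta\phi_\theta|$ is controlled by $|\nabla\rho|/(\rho-R_1)$. We then change variables $s=\phi_\theta(r)$ in the integral. The Jacobian ratio $h(r)^{n-1}/h(s)^{n-1}$ and the ratio $h(s)^2/h(r)^2$ are bounded by $\sup h/\inf h$ over $[R_1,R_M]$, a quantity depending only on $g,R_m,R_M$. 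This gives $\int_{\tilde\Omega}|\nabla u|^2\le C_2\int_{A_M}|\nabla v|^2$, and hence the upper bound.

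\textbf{Lower bound.} The argument is symmetric. We take an eigenfunction $u$ of $\tilde\Omega$ (not separable) and pull it back to $A_m$ through the inverse map $r\mapsto\phi_\theta^{-1}$ adapted to $R_m$, again fixing $\partial B_{R_1}$. Since $\mu_1(A_m)$ is an infimum, this gives $\mu_1(A_m)\le C\,\mu_1(\tilde\Omega)$, that is, $C_1=1/C$.

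\textbf{Main obstacle.} The expected obstacle is the angular derivative of the map, which produces the cross term $v_r\nabla_\theta\rho$. With the linear map above its constant necessarily involves $\|\nabla\rho\|_\infty$, i.e. the Lipschitz character of the star-shaped boundary, unless we interpret the constants as allowed to depend on that as well. To try to avoid this dependence, in the upper bound we would instead exploit the explicit form $v=f(r)Y(\theta)$: take $u(r,\theta)=f(\min(r,R_M))Y(\theta)$ extended radially, or simply $u=v$ restricted to $\tilde\Omega\subset B_{R_M}$ in the region where it is defined. This requires no derivative of $\rho$, since $\tilde\Omega\subset A_M$ gives energy at most that of $v$ on $A_M$, so $\mu_1(\tilde\Omega)\le\mu_1(A_M)$ directly. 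For the lower bound we would use a one-dimensional comparison along each ray. For fixed $\theta$, the energy on $[R_1,\rho(\theta)]$ dominates the energy on $[R_1,R_m]$, where the trace at $R_1$ is controlled by the 1D Steklov--Neumann quotient with weight $h^{n-1}$. Restricting $u$ to $A_m\subset\tilde\Omega$ yields $\int_{\tilde\Omega}|\nabla u|^2\ge\int_{A_m}|\nabla u|^2\ge\mu_1(A_m)\int_{\Gamma_2}u^2$, which is admissible since $\Gamma_2$ is shared. If this inclusion argument works cleanly (as it appears to), the constants depend only on $g,R_m,R_M$ and may even be taken equal to $1$, matching the statement's constants.
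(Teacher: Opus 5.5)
\textbf{There is a genuine gap, and it lies in your first choice of boundary conditions.} You put the Steklov condition on $\partial B_{R_1}$ and the Neumann condition on $\partial\tilde{\Omega}_{\mathrm{out}}$. In this paper it is the other way round: the hole carries the Neumann condition and the outer boundary carries the Steklov condition. Three places confirm this:
\begin{itemize}
\item Section~\ref{subsec:radial}, where $\partial_\nu u=0$ on $\partial B_{R_1}$ and $\partial_\nu u=\mu u$ on $\partial B_{R_2}(d)$;
\item Theorem~\ref{thm:convergence}, where the limit as the hole shrinks is the Steklov eigenvalue of $\Omega_{\mathrm{out}}$;
\item the proof of this very theorem, where admissibility means $\int_{\partial\tilde\Omega_{\mathrm{out}}}\psi\,dS=0$.
\end{itemize}

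With the correct reading, $\tilde\Omega$, $A_M=B_{R_M}\setminus\overline{B_{R_1}}$ and $A_m=B_{R_m}\setminus\overline{B_{R_1}}$ have three different Steklov boundaries: $\partial\tilde\Omega_{\mathrm{out}}$, $\partial B_{R_M}$ and $\partial B_{R_m}$. Restricting $v$ from $A_M$ to $\tilde\Omega$, or $u$ from $\tilde\Omega$ to $A_m$, then preserves neither the constraint $\int_{\Gamma_2}u\,dS=0$ nor the denominator. Only the energy comparison by inclusion survives.

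Your claim that one may take $C_1=C_2=1$ is in fact false for the actual problem. Take $h(r)=r$, $p=0$ and $\tilde\Omega_{\mathrm{out}}=B_{R_2}(d)$ with $d>0$, so $R_M=R_2+d$ and $R_m=R_2-d$. By Theorem~\ref{thm:convergence} and the explicit formula for annuli, as $R_1\to 0$ we have $\mu_1(\tilde\Omega)\to 1/R_2$, $\mu_1(A_M)\to 1/(R_2+d)$ and $\mu_1(A_m)\to 1/(R_2-d)$. So both inequalities fail with constant $1$ once $R_1$ is small. What your inclusion argument proves is domain monotonicity for a different problem. Under your reading the bounds with constant $1$ would hold for every connected $\tilde\Omega$ with $A_m\subset\tilde\Omega\subset A_M$, star-shaped or not. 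That alone should signal that this is not the intended problem, since the paper devotes a subsection to why star-shapedness is needed.

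\textbf{What is missing are the two ingredients the paper uses to pass between different Steklov boundaries.}

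First, a recentering step. Set $\psi=\psi_1-\overline{\psi_1}$, where $\overline{\psi_1}$ is the mean of $\psi_1$ over $\partial\tilde\Omega_{\mathrm{out}}$. This leaves the energy unchanged. It can only increase $\int_{\partial B_{R_M}}\psi^2\,dS$, because $\psi_1$ already has zero mean on $\partial B_{R_M}$.

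Second, a ray-by-ray comparison of the boundary integrals, using $dS=\sec(\theta_u)\,h^{n-1}(r_u)\,du$ with $1\le\sec\theta_u\le\sqrt{1+a}$. This is Proposition~\ref{upper bound inequality} and its lower-bound analogue. It gives $C_2=h^{n-1}(R_M)/h^{n-1}(R_m)$ and a $C_1$ involving $\sqrt{1+a}$.

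Note that this comparison matches the value at $\exp_p(r_u u)$ with the value at $\exp_p(R_M u)$ (resp.\ $\exp_p(R_m u)$). That identification is literally correct for a function transplanted along rays. This is exactly what your map $\phi_\theta$ does, since it sends $\rho(\theta)$ to $R_M$ and maps the Steklov boundary onto the Steklov boundary.

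So the approach to keep is your first one, the transplantation, supplemented by recentering with respect to the target's boundary measure — not the inclusion shortcut. The dependence on $\nabla\rho$ that you tried to avoid is not an artifact. It is the angle constant $a=\tan^2\alpha$ from \eqref{eqn: constt a}, which also enters the paper's lower bound.
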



These bounds reveal that the geometry of star-shaped domains can be
quantitatively controlled through suitable comparison with concentric
annuli. In particular, they extend known inequalities for
Steklov--Dirichlet and Steklov--Robin problems to the
Steklov--Neumann setting.

 Our third result regards the asymptotic behavior of the spectrum when the
inner hole shrinks. Understanding this limit is crucial both from the
analytic and geometric viewpoint, as it clarifies the relation between the
Steklov--Neumann problem on perforated domains and the classical Steklov
problem on simply connected domains.

\begin{theorem}\label{thm:convergence}
Let $\Omega_{\mathrm{out}} \subset \mathbb{R}^n$ be a bounded, connected domain with Lipschitz boundary, and let $B_r$ denote the ball of radius $r>0$ centered at the origin such that $\overline{B_r} \subset \Omega_{\mathrm{out}}$.  
Define the punctured domain
$\Omega_r = \Omega_{\mathrm{out}} \setminus \overline{B_r}.$
Then we have 
$$
\lim_{r\to 0}\mu_1(\Omega_r) = \sigma_1(\Omega_{\mathrm{out}}),
$$
where $\sigma_1(\Omega_{\mathrm{out}})$ is the first non-trivial Steklov eigenvalue. Moreover, there exists a sequence of eigenfunctions $\{u_1^r\}_r$ corresponding to $\mu_1(\Omega_r)$ and an eigenfunction $\overline{u_1}$ corresponding to $\sigma_1(\Omega_{\mathrm{out}}),$ such that $u_1^r$ converges to $\overline{u_1}$ in $H^1(\Omega_{\mathrm{out}}),$ as $r\to 0.$ 
\end{theorem}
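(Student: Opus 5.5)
We prove the two inequalities $\limsup_{r\to0}\mu_1(\Omega_r)\le\sigma_1(\Omega_{\mathrm{out}})$ and $\liminf_{r\to0}\mu_1(\Omega_r)\ge\sigma_1(\Omega_{\mathrm{out}})$. Then we extract the convergence of eigenfunctions from the compactness argument used for the lower bound. Throughout, $\Gamma_2=\partial\Omega_{\mathrm{out}}$ is fixed and the Neumann part is $\Gamma_1=\partial B_r$. This is what makes the problem tractable: the trace space on which we normalize does not depend on $r$.

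\emph{Upper bound.} Let $\overline{v}$ be an eigenfunction for $\sigma_1(\Omega_{\mathrm{out}})$. It has $\int_{\partial\Omega_{\mathrm{out}}}\overline v\,dS=0$ and is in $H^1(\Omega_{\mathrm{out}})$. We plug its restriction to $\Omega_r$ into the characterization \eqref{characterization}. The constraint is unchanged, the denominator is unchanged, and the numerator is $\int_{\Omega_r}|\nabla\overline v|^2\le\int_{\Omega_{\mathrm{out}}}|\nabla \overline v|^2$. Hence $\mu_1(\Omega_r)\le\sigma_1(\Omega_{\mathrm{out}})$ for every $r$. This monotonicity is elementary and needs no capacity estimate, since Neumann holes only decrease the Rayleigh quotient.

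\emph{Lower bound and compactness.} Let $u^r$ be eigenfunctions for $\mu_1(\Omega_r)$ normalized by $\int_{\Gamma_2}(u^r)^2=1$ and $\int_{\Gamma_2}u^r=0$, so that $\int_{\Omega_r}|\nabla u^r|^2=\mu_1(\Omega_r)\le\sigma_1$. The key step is to extend $u^r$ into $B_r$ with uniformly controlled $H^1$ norm. Fix $r_0$ with $\overline{B_{2r_0}}\subset\Omega_{\mathrm{out}}$ and take $r<r_0$. Use the reflection (Kelvin-type inversion) $x\mapsto r^2x/|x|^2$ across $\partial B_r$, which maps $B_r\setminus\{0\}$ onto $\mathbb{R}^n\setminus \overline{B_r}$, composed with a cutoff. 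More simply, use the extension operator for the annulus $B_{2r}\setminus B_r$, obtained by scaling a fixed extension operator from $B_2\setminus B_1$ to $B_2$. Apply it to $u^r-c_r$, where $c_r$ is the mean of $u^r$ on $B_{2r}\setminus B_r$. By scale invariance of the Dirichlet energy in the homogeneous seminorm, and Poincaré on the annulus (also scale invariant after subtracting the mean), we get $\|\nabla Eu^r\|_{L^2(B_{2r})}\le C\|\nabla u^r\|_{L^2(B_{2r}\setminus B_r)}$ with $C$ independent of $r$. I expect this uniform extension to be the main technical point, although the scaling argument should handle it.

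The extended functions $\tilde u^r$ then have uniformly bounded gradient on $\Omega_{\mathrm{out}}$ and boundary trace normalized with zero mean. A trace-Poincaré inequality on $\Omega_{\mathrm{out}}$ gives a uniform $H^1$ bound. Extract a subsequence with $\tilde u^r\rightharpoonup \overline u$ weakly in $H^1$, strongly in $L^2(\Omega_{\mathrm{out}})$, and strongly in $L^2(\partial\Omega_{\mathrm{out}})$ by compactness of the trace for Lipschitz domains. In the limit, $\int_{\partial\Omega_{\mathrm{out}}}\overline u^2=1$ and $\int_{\partial\Omega_{\mathrm{out}}}\overline u=0$. Lower semicontinuity plus the extension bound, with $\|\nabla u^r\|_{L^2(B_{2r}\setminus B_r)}$ bounded, is not quite enough, so argue via the weak equation instead. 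For $\varphi\in C^\infty(\overline{\Omega_{\mathrm{out}}})$ we have $\int_{\Omega_r}\nabla u^r\nabla\varphi=\mu_1(\Omega_r)\int_{\Gamma_2}u^r\varphi$. The contribution of $B_r$ is bounded by $\|\nabla\tilde u^r\|_{L^2}\|\nabla\varphi\|_{L^2(B_r)}\to0$. Passing to the limit along a subsequence with $\mu_1(\Omega_r)\to\mu^*\le\sigma_1$ shows that $\overline u$ is a Steklov eigenfunction with eigenvalue $\mu^*$. It is nonconstant, since it has zero mean and unit norm on the boundary, so $\mu^*\ge\sigma_1$. Hence $\mu^*=\sigma_1$ for every subsequence, which proves the eigenvalue limit.

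\emph{Strong convergence.} $\limsup\int_{\Omega_{\mathrm{out}}}|\nabla\tilde u^r|^2\le\limsup(\mu_1(\Omega_r)+\int_{B_r}|\nabla\tilde u^r|^2)$. To show that the second term vanishes, test the limit equation and use that the gradient energy on $B_{2r}\setminus B_r$ tends to $0$. This follows because $\int_{\Omega_r}|\nabla u^r|^2\to\sigma_1=\int|\nabla\overline u|^2$, and weak lower semicontinuity on each fixed $\Omega_{\mathrm{out}}\setminus B_\delta$ forces no energy concentration near the origin. Thus the norms converge, and together with weak convergence this upgrades to strong $H^1(\Omega_{\mathrm{out}})$ convergence of $\tilde u^r$ (identified with $u^r$) to $\overline u$.
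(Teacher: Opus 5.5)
Your argument is correct, but it controls the energy inside the hole differently from the paper. The paper extends $u_1^r$ harmonically into $B_r$. It then uses the lemma derived from \cite{girouard2021steklov}, $\int_{B_r}|\nabla h|^2\le 5(r/R)^n\bigl(1+O((r/R)^n)\bigr)\int_{A_{r,R}}|\nabla u|^2$, on $A_{r_\varepsilon,\varepsilon}$ with $r_\varepsilon/\varepsilon\to0$. This gives directly that the energy in the hole vanishes, which the paper uses both for the uniform $H^1$ bound and for the norm convergence in Step 3.

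You instead use a rescaled fixed extension operator. It only gives the scale-invariant, non-decaying bound $\|\nabla\tilde u^r\|_{L^2(B_r)}\le C\|\nabla u^r\|_{L^2(A_{r,2r})}$. That is enough for compactness and for passing to the limit in the weak equation, where you correctly dispose of the $B_r$ term via $\|\nabla\varphi\|_{L^2(B_r)}\to0$. You then recover the vanishing of the hole energy a posteriori by a no-concentration argument.

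That step is sound but worth writing out. For fixed $\delta$, lower semicontinuity on $\Omega_{\mathrm{out}}\setminus\overline{B_\delta}$ together with $\mu_1(\Omega_r)\to\sigma_1(\Omega_{\mathrm{out}})=\int_{\Omega_{\mathrm{out}}}|\nabla\overline u|^2$ gives $\limsup_{r\to0}\int_{B_\delta\setminus\overline{B_r}}|\nabla u^r|^2\le\int_{B_\delta}|\nabla\overline u|^2$. The right-hand side is small for small $\delta$. Hence $\int_{A_{r,2r}}|\nabla u^r|^2\to0$, and therefore $\int_{B_r}|\nabla\tilde u^r|^2\to0$.

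The paper's route buys an explicit rate $O((r_\varepsilon/\varepsilon)^n)$ for the hole energy. The price is that it relies, through an external lemma, on harmonicity, the Neumann condition on $\partial B_r$ and the spherical shape of the hole. Your route is soft and self-contained and uses none of these in the extension step. It should also adapt with little change to shrinking holes $r\omega$ of other fixed Lipschitz shapes.
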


The asymptotic convergence described in
Theorem~\ref{thm:convergence} has significant geometric consequences.
In particular, it allows us to derive isoperimetric-type inequalities for
the first nonzero Steklov--Neumann eigenvalue on domains with sufficiently small
holes. More precisely, for $r$ small enough, the eigenvalue
$\mu_1(\Omega_r)$ can be controlled by the corresponding eigenvalue of a
concentric annulus satisfying either a measure or a perimeter constraint,
as established in Corollaries~\ref{thm:isoperimetric1}
and~\ref{thm:isoperimetric2}.

These estimates should be compared with the classical inequalities
available for non-perforated domains. In the Steklov setting, in \cite{Bro2001} a sharp upper bound is provided for the first non-trivial Steklov
eigenvalue under a volume constraint, while the Weinstock inequality
characterizes the ball as the maximizer under a perimeter constraint
(in dimension two for simply connected domains, see \cite{Wein1954}, and in higher dimensions
within the class of convex sets, \cite{BFNT2021}). The results obtained here show that,
in the vanishing-hole regime, analogous comparison principles continue to
hold for the mixed Steklov--Neumann problem.

Finally, we investigate the number of nodal domains of the eigenfunctions corresponding to the first non-trivial Steklov-Neumann eigenvalue on a doubly connected domain. A nodal domain of a function is a connected component contained in $\Omega$ where the function does not change sign (for the precise definition see Section \ref{sec:6}).

\begin{theorem}
\label{prop:nodaldomains}
    Let $\Omega$ be a bounded doubly connected domain with Lipschitz boundary. The nodal domains of the eigenfunctions corresponding to $\mu_1(\Omega)$ are exactly $2$.
    
\end{theorem}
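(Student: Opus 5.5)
Let $u$ be an eigenfunction for $\mu_1(\Omega)$, so $u$ is harmonic in $\Omega$ with the boundary conditions of \eqref{SN Problem}. The plan is to prove two things: $u$ has at least two nodal domains, and $u$ has at most two.

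\textbf{At least two nodal domains.} Taking $v\equiv 1$ in \eqref{weak formulation SN} gives $\mu_1\int_{\Gamma_2}u\,dS=0$, hence $\int_{\Gamma_2}u\,dS=0$. The function $u$ cannot vanish identically on $\Gamma_2$. Otherwise it would have zero Cauchy data there, since $\partial_\nu u=\mu_1 u=0$ on $\Gamma_2$, and unique continuation for harmonic functions would force $u\equiv 0$. Therefore $u$ takes both signs on $\Gamma_2$, and by continuity up to the boundary it takes both signs in $\Omega$. This gives at least one positive and at least one negative nodal domain.

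\textbf{At most two nodal domains.} This is the Courant-type part and the main step. Suppose, for contradiction, that there are at least three nodal domains $D_1,D_2,D_3$. The first task is to show that every nodal domain $D$ satisfies $\overline{D}\cap\Gamma_2$ of positive measure with $u\not\equiv0$ there. If $\partial D\cap\Omega$ is the only place $u$ fails to vanish on $\partial D$, or more precisely if $\partial D\subset \{u=0\}\cup\Gamma_1$, then $u\chi_D\in H^1(\Omega)$. Testing the weak formulation with $v=u\chi_D$ gives
\[
\int_D|\nabla u|^2\,dV=\mu_1\int_{\Gamma_2\cap\partial D}u^2\,dS=0 .
\]
So $u$ is constant on $D$, and since $u\neq 0$ on $D$ this contradicts $u=0$ on $\partial D\cap\Omega$. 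Here one must check that $\partial D\cap\Omega\neq\emptyset$, which holds since $u$ changes sign in the connected set $\Omega$.

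Hence each $D_i$ meets $\Gamma_2$ nontrivially, and $u_i:=u\chi_{D_i}\in H^1(\Omega)$ satisfies
\[
\int_\Omega|\nabla u_i|^2\,dV=\mu_1\int_{\Gamma_2}u_i^2\,dS,\qquad \int_{\Gamma_2}u_i^2\,dS>0 .
\]
Choose $\alpha\in\mathbb R^3\setminus\{0\}$ such that $w=\sum_i\alpha_iu_i$ satisfies $\int_{\Gamma_2}w\,dS=0$ and, say, $\alpha_3=0$. This is one linear condition on two coefficients, so a nonzero choice exists. Because the supports are disjoint, the Rayleigh quotient of $w$ equals $\mu_1$, so by \eqref{characterization} the function $w$ is a minimizer. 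A minimizer of \eqref{characterization} is an eigenfunction for $\mu_1$, so $w$ is harmonic in $\Omega$. But $w\equiv0$ on the open set $D_3$, and unique continuation gives $w\equiv0$, contradicting $\alpha\neq0$. If needed, I would justify the Euler--Lagrange step by noting that the Lagrange multiplier for the constraint vanishes after testing with constants.

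\textbf{Main obstacle.} The delicate point is the regularity claim $u\chi_D\in H^1(\Omega)$ for Lipschitz $\Omega$, together with the justification of testing with it. I would handle this in the standard way: approximate $u\chi_D$ by $(u-\varepsilon)^+\chi_D$ for positive domains, or use that $u\in H^1$ and that truncation of $u$ to a nodal domain preserves $H^1$ as in Courant's theorem. The remaining arguments are routine.
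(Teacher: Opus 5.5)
Your proposal is correct and follows essentially the same Courant-type argument as the paper. You take a combination of $u$ restricted to nodal domains with zero mean on $\Gamma_2$ that vanishes on a third nodal domain, show its Rayleigh quotient equals $\mu_1$ so it is a harmonic eigenfunction vanishing on an open set, and get the lower bound from $\int_{\Gamma_2}u\,dS=0$ forcing a sign change. Your extra checks (that each nodal domain carries positive $L^2(\Gamma_2)$ mass so the quotient is well defined, testing the weak formulation with $u\chi_D$ instead of integrating by parts on nodal domains, and the vanishing Lagrange multiplier) make explicit points the paper leaves implicit.
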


\subsection{Organization of the Paper}

The paper is organized as follows. Section~2 contains preliminary results. In Section~3 we prove Theorem \ref{theorem: optimal},  establishing the optimality of concentric annuli.
In Section~4 we prove Theorem \ref{thm:upper-lower}, deriving eigenvalue bounds for star-shaped domains. Section~5 is dedicated to the proof of Theorem \ref{thm:convergence}, addressing 
 the asymptotic analysis of the mixed eigenvalue as the hole shrinks. Section~6 is devoted to the study of nodal domain
properties, proving Theorem \ref{prop:nodaldomains}.

\section{Preliminary results}
\subsection{Standard notations and some useful parametrizations in the Riemannian setting}
Let $M$ be a hypersurface of revolution  with  metric 
   $g = dr^2 + h^2(r)\, g_{\mathbb{S}^{n-1}},$
where $g_{\mathbb{S}^{n-1}}$ denotes the standard metric on the unit sphere 
$\mathbb{S}^{n-1}$, and $r \in [0, L]$ for some $L > 0$. 
Further, we assume that the function $h:[0,L]\to\mathbb{R}$ satisfies $h(0) = 0,  h'(0) = 1$, and $h(r)$ is an increasing function of $r.$
Let us fix a point $p\in  M$ and consider  $\tilde{\Omega}_{out} \subset M$  a bounded, smooth, and star-shaped domain with respect to  $p$. Let us introduce the following notation: we denote by $T_pM$ the tangent space of $M$  at the point $p\in M$ and by $exp_p$ the exponential map defined from $T_pM$ to $M$.
Let $\|u\| := \sqrt{\langle u, u\rangle}_g$. For each $p \in M$, define the unit sphere in the tangent space at $p$ by
\begin{align*}
    U_pM:=\{u\in T_pM: \|u\|=1\}.
\end{align*}
Note that corresponding to each $u \in U_pM,$ there exist a unique point $q\in \partial \tilde{\Omega}_{out}$ such that $q=exp_p(r_uu)$ for some $r_u>0.$ Then the domain $\tilde{\Omega}_{out}$ and its boundary $\partial\tilde{\Omega}_{out}$ can be expressed as 
 \begin{align*}
     \tilde{\Omega}_{out}=\{exp_p(tu): u\in U_pM, 0<t<r_u\}, \quad \partial \tilde{\Omega}_{out}= \{exp_p(r_uu):u\in U_pM\}.
 \end{align*}
We denote by
\begin{equation}\label{inradius}
    \displaystyle R_{m} := \min_{u \in U_{p}
M}r_u, \quad\displaystyle R_{M} := \max_{u \in U_{p}M} r_u .
\end{equation}

Let $\partial_r$ denote the radial vector field starting from $p$, and let $\nu $ be the unit outward normal to $\partial \tilde{\Omega}_{out}.$ For any point $q\in \partial \tilde{\Omega}_{out}$, let $\theta(q)$ denote the angle between $\nu (q)$ and $\partial_r(q)$. Since $\tilde{\Omega}_{out}$ is a bounded smooth star-shaped domain,
$$ cos(\theta(q))=\langle \nu (q), \partial_r(q)\rangle >0, $$
that implies 
$\theta(q)<\frac{\pi}{2}$. By compactness of $\partial\tilde{\Omega}_{out},$ there exist a constant $\alpha$ such that $0\leq \theta(q)\leq \alpha < \frac{\pi}{2}$ for all $q\in \partial \tilde{\Omega}_{out}.$  Let $a = \tan^{2} \alpha$.
Thus, for all $q \in \partial \tilde{\Omega}_{out}$,  
\begin{align} \label{eqn: constt a}
  \frac{\|\overline{\nabla} R_u \|^2}{h^2(R_u)} 
= \tan^{2}\!\big(\theta(q)\big) 
\;\leq\; \tan^{2}(\alpha) = a,  
\end{align}
where $\overline{\nabla} R_u$ denotes the component of $\nabla R_u$ tangent to $\mathbb{S}^{n-1}$. For more details, see Section~2 of~\cite{Basak_Chorwadwala_Verma_2025} and the references therein.

\subsection{The radial case}\label{subsec:radial}
 Let $R>0$,  we define $B_{R}(d) = y_d + B_{R}$, where $y_d = (0, 0, \dots, d)$, the ball centered at the point $y_d$ of radius $R>0$. If $d=0$, we will denote the ball by $B_R$. Let us fix $R_2>R_1>0$ and for $d \in [0, R_2 - R_1)$, we denote the set $\Omega_d := B_{R_2}(d) \setminus \overline{B_{R_1}}$. Let us consider the following Steklov--Neumann eigenvalue problem on  $\Omega_d$ 
\begin{align*}
        \begin{cases}
            \Delta u =0 \quad \text{in} \,\,\,\Omega_d=B_{R_2}(d) \setminus \overline{B_{R_1}}\\
            \frac{\partial u}{\partial \nu}=0 \quad \text{on}\,\,\, B_{R_1}\\
            \frac{\partial u}{\partial \nu}= \mu u \quad \text{on} \,\,\, B_{R_2}(d),
        \end{cases}
\end{align*}
where $\nu$ denote the outward unit normal vector to the boundary. For $d = 0$, we have $\Omega_0 = B_{R_2} \setminus \overline{B_{R_1}}$. In this case, $\mu_l(\Omega_0)$ can be explicitly computed and it is given by (see \cite{basak2024bounds})
 \begin{align*}
      \mu_l(\Omega_0) = \dfrac{l(l + n - 2)\left( \left( \frac{R_2}{R_1} \right)^{2l + n - 2} - 1 \right)}{R_2 \left( (l + n - 2)\left( \frac{R_2}{R_1} \right)^{2l + n - 2} + l \right)}, \quad  \quad l\geq 0.
 \end{align*}
 Moreover, the corresponding eigenfunctions are of the form
 $$w_l^j(t,\theta)= \bigg(r^l + \dfrac{l R_1^{2l + n - 2}}{(l + n - 2) r^{l + n - 2}}\bigg)Y_l^j(\theta),$$   where $Y_l^j(\theta)$ are the spherical harmonics, i.e. the eigenfunction of the Laplace Beltrami $\Delta_{\mathbb{S}^{n-1}}$ corresponding to the eigenvalue $l(l+n-2)$. The radial symmetry implies that  $\mu_l(\Omega_0)$ has the same multiplicity of the eigenvalue $l(l+n-2)$. In particular, the multiplicity of the first non-zero Steklov-Neumann eigenvalue is $n,$ and the corresponding eigenfunctions written in Cartesian coordinates are 
 \begin{equation*}
    u^i_n(x_1,x_2,\dots, x_n)= \left(r+\frac{R_1^n}{(n-1)r^{n-1}}\right)\frac{x_i}{r},
 \end{equation*}
where $i=1,2,\dots, n.$
For a fixed $i\in \{1,2,\dots,n-1\}, \,\, B_{R_2}(d)$ is symmetric with respect to the hyperplane $\{x_i=0\}$, so $$u_n^i(x_1,\dots,-x_i, \dots x_n)= -u_n^i(x_1,\dots,x_i, \dots x_n).$$ Using this condition we obtain  
\begin{align*}
    \int_{\partial B_{R_2}(d)} u_n^i ds &= \int_{\partial B_{R_2}(d) \cap \{x_i\geq 0\}} u_n^i ds \,\,+ \int_{\partial B_{R_2}(d) \{x_i\leq 0\}} u_n^i ds\\
    &= \int_{\partial B_{R_2}(d) \cap \{x_i\geq 0\}} u_n^i ds \,\,- \int_{\partial B_{R_2}(d) \{x_i\geq 0\}} u_n^i ds=0.
\end{align*}
 Therefore, for every  $i\in \{1,2, \dots, n-1\} $, the eigenfunction $u_n^i$ can be taken as a test function in the variational characterization of $\mu_1(\Omega_d).$

 \subsection{Some preliminary Lemmas}
Let us recall the spherical coordinates in $n$ dimensions, that are
\begin{align*}
    \begin{cases}
    x_1=r\sin \theta_1 \sin \theta_2 \dots \sin \theta_{n-2}\sin \theta_{n-1},\\
    x_2=r\sin \theta_1 \sin \theta_2 \dots \sin \theta_{n-2}\cos \theta_{n-1},\\ 
    \quad \quad \vdots\\
     x_{n-1}= r\sin \theta_1 \cos \theta_2,\\
     x_n= r\cos \theta_1,
\end{cases}
\end{align*}
where $(r, \theta_1, \dots, \theta_{n-1})\in \mathbb{R^+} \times [0,\pi]\times,\dots,\times [0,\pi]\times [0,2\pi].$
Let $P \in \partial B_{R_2}(d)$, and denote by $R_d(\theta_1)$ the distance between the origin $O$ and the point $P$.  Here, $\theta_1$ represents the angle formed by the line segment $OP$ with the $x_1$–axis. Then, 
\begin{align*}
   R_d(\theta_1) = d \cos \theta_1 + \sqrt{R_2^2 - d^2 \sin^2 \theta_1}, 
\end{align*}
and 
\begin{align*}
  \sqrt{R_d(\theta_1)^2 + {R_{d}'^2(\theta_1)}}
= R_2 \left( 1 + \frac{d \cos \theta_1}{\sqrt{R_2^2 - d^2 \sin^2 \theta_1}} \right).  
\end{align*}
For more details, we refer the reader to~\cite{ftouhi2022place}.

Since for every $i\in \{1,2, \dots, n-1\},$ eigenfunction $u_n^i$ can be used as a test function in the variational characterization of $\mu_1(\Omega_d)$, we take $f=u_n^{n-1}$. Using spherical coordinates, we have 
 \begin{align} \label{test function}
     f(r,\theta)= \begin{cases}
         \sin \theta_1 \left(r+\frac{R_1^2}{r}\right) \quad \text{for} \quad n=2,\\
         \sin \theta_1 \cos \theta_2 \left(r+\frac{R_1^2}{r}\right) \quad \text{for} \quad n \geq 3.
     \end{cases}
     \end{align}
Next, we recall the following lemmas, which will be used in the proof of Theorem \ref{theorem: optimal}. A detailed proof of these two lemmas are provided in~\cite{ftouhi2022place}.
\begin{lemma} \label{A comparison}
For every $n \geq 2$ and $d \in [0, R_2 - R_1)$, define
\[
\begin{cases}
A_1(d) = \displaystyle \int_{0}^{\pi} \sin^{n-2}\theta_1 \, \big( R_d^n(\theta_1) - R_1^n \big) \, d\theta_1, \\[8pt]
A_2(d) = \displaystyle \int_{0}^{\pi} \phi(\theta_1)
\ln\!\left( \frac{R_d(\theta_1)}{R_1} \right) \, d\theta_1, 
\quad \text{where} \quad 
\phi(\theta_1) = -n\sin^n\theta_1 + (n-1)\sin^{n-2}\theta_1, \\[8pt]
A_3(d) = \displaystyle \int_{0}^{\pi} \psi(\theta_1)
\left( \frac{1}{R_d^n(\theta_1)} - \frac{1}{R_1^n} \right) \, d\theta_1, 
\,\, \text{where}\,\,
\psi(\theta_1) = (n-2)\sin^n\theta_1 + (n-1)\sin^{n-2}\theta_1.
\end{cases}
\]
 Then the following holds:
\begin{enumerate}
    \item $A_1(d) = A_1(0)$,
    \item $A_2(d) = 0 $,
    \item $A_3(d) \geq A_3(0)$, with equality if and only if $d = 0$.
\end{enumerate}
\end{lemma}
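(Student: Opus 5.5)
The plan is to read each $A_i(d)$ as a volume integral over $\Omega_d=B_{R_2}(d)\setminus\overline{B_{R_1}}$ of a suitable function, using spherical coordinates centered at the origin with $\theta_1$ measured from the $x_n$-axis, i.e.\ the symmetry axis of the displaced ball. Since $\partial B_{R_2}(d)$ is the radial graph $r=R_d(\theta_1)$, for any $g(r,\theta_1)$ we have
\[
\int_{\Omega_d} g\,dx=|\mathbb{S}^{n-2}|\int_0^\pi \sin^{n-2}\theta_1\int_{R_1}^{R_d(\theta_1)} g\,r^{n-1}\,dr\,d\theta_1 .
\]
Each $A_i$ will be matched to some $g$ by solving $\int_{R_1}^{R}g\,r^{n-1}dr=$ (the given radial expression). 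The angular weight is then divided by $\sin^{n-2}\theta_1$.

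\textbf{Step 1 ($A_1$).} Take $g\equiv n$. This gives $A_1(d)=\frac{n}{|\mathbb{S}^{n-2}|}\,|\Omega_d|=\frac{n}{|\mathbb{S}^{n-2}|}\big(|B_{R_2}|-|B_{R_1}|\big)$, which is independent of $d$. Hence $A_1(d)=A_1(0)$.

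\textbf{Step 2 ($A_2$).} We need $\int_{R_1}^{R}g\,r^{n-1}dr=\ln(R/R_1)$ with angular factor $\phi/\sin^{n-2}\theta_1=(n-1)-n\sin^2\theta_1$. Using $\sin^2\theta_1=1-x_n^2/|x|^2$, this gives
\[
g(x)=\frac{n x_n^2-|x|^2}{|x|^{n+2}}=c_n\,\partial_{x_nx_n}\Gamma(x),
\]
with $\Gamma$ the fundamental solution (and $\ln|x|$ when $n=2$). Thus $A_2(d)$ is a multiple of $\int_{\Omega_d}\partial_{nn}\Gamma\,dx$, where $\partial_{nn}\Gamma$ is harmonic on $\mathbb{R}^n\setminus\{0\}$ and is a degree-$2$ zonal spherical harmonic times $|x|^{-n}$.

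To show this integral vanishes for every $d$, I would write $\Omega_d$ as $B_{R_2}(d)\setminus\overline{B_\varepsilon}$ minus the concentric shell $B_{R_1}\setminus\overline{B_\varepsilon}$. The shell contributes $0$, because the spherical mean of a degree-$2$ harmonic vanishes on every sphere $|x|=\rho$. For the remaining piece, apply the divergence theorem to $\partial_n(\partial_n\Gamma)$ on $B_{R_2}(d)\setminus\overline{B_\varepsilon}$. The inner boundary term $\int_{|x|=\varepsilon}\partial_n\Gamma\,\nu_n$ is independent of $\varepsilon$ by scaling. The outer term $\int_{\partial B_{R_2}(d)}\partial_n\Gamma\,\nu_n\,dS$ can be identified, via Newton's theorem, with the field of a uniform mass on a sphere containing the origin. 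Consistency at $d=0$, where $A_2(0)=0$ trivially because $\int_0^\pi\phi=\int_0^\pi(\sin^{n-1}\theta\cos\theta)'\,d\theta=0$, then forces $A_2(d)=0$.

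As an alternative check, I would integrate by parts using $\phi=(\sin^{n-1}\theta_1\cos\theta_1)'$. This gives $A_2(d)=-\int_0^\pi\sin^{n-1}\theta_1\cos\theta_1\,\frac{R_d'}{R_d}\,d\theta_1$, and one can verify directly that $\frac{d}{dd}$ of this quantity vanishes, using the explicit formula for $R_d$.

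\textbf{Step 3 ($A_3$), the main obstacle.} Here $\frac{d}{dR}(R^{-n})=-nR^{-n-1}$, so $A_3(d)-A_3(0)=\int_{B_{R_2}(d)}G-\int_{B_{R_2}}G$ with
\[
G(x)=-n\,\frac{(n-2)\sin^2\theta_1+(n-1)}{|x|^{2n}}<0 .
\]
The two balls have equal volume. The shifted ball trades points with $|x|<R_2$ for points with $|x|>R_2$, and along each ray $|G|$ is decreasing in $r$, which heuristically gives $A_3(d)\ge A_3(0)$. The difficulty is that $G$ is not radial, so a pure rearrangement argument does not close.

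I would first try the Hadamard formula
\[
\frac{d}{dd}A_3(d)\propto\int_{\partial B_{R_2}(d)}G\,\nu_n\,dS=\int_{B_{R_2}(d)\setminus B_{R_1}}\partial_nG\,dx+\text{const},
\]
and show that this is strictly positive for $d>0$. To do so I would decompose $G$ into a radial part and a degree-$2$ zonal harmonic part, as in Step 2. The harmonic-type part should contribute zero by the same mean-value argument, while the radial part $-c|x|^{-2n}$ is radially increasing, so its contribution has a definite sign with equality only at $d=0$. If this decomposition fails to separate cleanly, the fallback is to compute $\frac{d}{dd}A_3$ explicitly from $R_d(\theta_1)=d\cos\theta_1+\sqrt{R_2^2-d^2\sin^2\theta_1}$, as done in \cite{ftouhi2022place}.
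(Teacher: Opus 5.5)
Your Steps 1 and 2 are sound. The paper gives no proof of this lemma and only refers to \cite{ftouhi2022place}.

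\textbf{A correction in Step 2.} The outer term $\int_{\partial B_{R_2}(d)}\partial_n\Gamma\,\nu_n\,dS$ is not the field of a uniform layer on a sphere, because the weight $\nu_n$ is not uniform. It equals $\partial^2_{y_n}$ at $y=0$ of the Newtonian potential of the uniform \emph{solid} ball $B_{R_2}(d)$. By Newton's theorem that potential is $\alpha+\beta|y-y_d|^2$ inside the ball, so the term equals $2\beta$ for every $d$. With this reading, $A_2(d)=A_2(0)=0$ follows.

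\textbf{The genuine gap is Step 3.} Write
\[
G=-\frac{2(n-1)^2}{|x|^{2n}}+n(n-2)\Big(\cos^2\theta_1-\frac1n\Big)\frac{1}{|x|^{2n}} .
\]
The second term is a degree-two zonal harmonic times $|x|^{-2n}$. A product $Y_2(x/|x|)f(|x|)$ is harmonic only for $f\in\operatorname{span}\{r^2,r^{-n}\}$, so the Step 2 cancellation does not apply. In fact this term does not drop out. Take $R_2=1$ and $t=\nu_n$, so that $|x|^2=1+2dt+d^2$ on $\partial B_1(d)$. Expanding to first order in $d$ gives
\[
\int_{\partial B_1(d)} n(n-2)\Big(\cos^2\theta_1-\frac1n\Big)\frac{\nu_n}{|x|^{2n}}\,dS=-\frac{2(n-1)(n-2)}{n+2}\,|\mathbb{S}^{n-1}|\,d+O(d^2),
\]
while the radial part contributes $4(n-1)^2|\mathbb{S}^{n-1}|\,d+O(d^2)$.

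So for $n\ge 3$ the angular part pushes the other way. Nothing in your proposal shows that the radial part dominates for all $d\in(0,R_2-R_1)$, and the fallback (computing $A_3'$ as in \cite{ftouhi2022place}) is only a pointer. For $n=2$ you have $\psi\equiv 1$, so $G$ is radial and your argument does work there.

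A smaller point: $\int_{B_{R_2}(d)}G$ diverges at the origin. The difference must be written as $\int_{B_{R_2}(d)\setminus B_{R_2}}G-\int_{B_{R_2}\setminus B_{R_2}(d)}G$, times $|\mathbb{S}^{n-2}|^{-1}$.

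\textbf{How to close Step 3.} Your rearrangement heuristic can be made rigorous without any derivative.
\begin{itemize}
\item Let $\sigma(x)=(x_1,\dots,x_{n-1},d-x_n)$ be the reflection across $\{x_n=d/2\}$. It maps $B_{R_2}\setminus B_{R_2}(d)$ onto $B_{R_2}(d)\setminus B_{R_2}$ and preserves volume.
\item On $B_{R_2}\setminus B_{R_2}(d)$ one has $x_n<d/2$, hence $(d-x_n)^2-x_n^2=d(d-2x_n)>0$. This gives $|\sigma x|>|x|$ for $d>0$ and $\cos^2\theta_1(\sigma x)\ge \cos^2\theta_1(x)$.
\item The function $|G|=n\big[(2n-3)-(n-2)\cos^2\theta_1\big]|x|^{-2n}$ is nonincreasing in $\cos^2\theta_1$ and strictly decreasing in $|x|$. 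Hence $|G\circ\sigma|<|G|$ on that set.
\item Since $G<0$, this yields $\int_{B_{R_2}(d)\setminus B_{R_2}}G>\int_{B_{R_2}\setminus B_{R_2}(d)}G$, that is, $A_3(d)>A_3(0)$ for $d>0$.
\end{itemize}
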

\begin{lemma}\label{V comparison}
    For every $n \geq 2$ and $d \in [0, R_2 - R_1)$,  define 
    \begin{align*}
        \begin{cases}
            V_1(d)&=\displaystyle\int _{0}^{\pi} \sin^n\theta_1 R_d^n(\theta_1) \sqrt{R_d(\theta_1)^2 + {R_{d}'^2(\theta_1)}} \,\, d\theta_1\\
            V_2(d) &= \displaystyle\int _{0}^{\pi} \sin^n\theta_1 \frac{d\cos\theta_1}{\sqrt{R_2^2-d^2\sin^2\theta_1}} d\theta_1\\
            V_3(d)&=\displaystyle\int_0^{\pi} \frac{R_2 \sin^n\theta_1}{R_d(\theta_1)^{n-1} \sqrt{R_2^2-d^2\sin^2\theta_1}}d\theta_1.
        \end{cases}
    \end{align*}
    Then the following holds: 
    \begin{enumerate}
    \item $V_1(d) = V_1(0)$,
    \item $V_2(d) = 0$,
    \item $V_3(d) \geq V_3(0)$, with equality if and only if $d =0$.
\end{enumerate}
\end{lemma}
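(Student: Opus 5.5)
The proof of Lemma~\ref{V comparison} rests on reading $V_1$, $V_2$, $V_3$ as one-dimensional reductions of surface integrals over the translated sphere $\partial B_{R_2}(d)$. The sphere can then be moved back to the origin, or the $d$-dependence handled by symmetry.

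\textbf{Step 1 (geometric identities).} I first record two facts that follow from the law of cosines $R_d^2-2dR_d\cos\theta_1+d^2=R_2^2$. Writing $s(\theta_1)=\sqrt{R_2^2-d^2\sin^2\theta_1}$, they are
$$R_d-d\cos\theta_1=s, \qquad \sqrt{R_d^2+R_d'^2}=R_2\,\frac{R_d}{s}.$$
The second is a restatement of the identity quoted before Lemma~\ref{A comparison}. Hence, in polar coordinates about the origin, the surface measure on $\partial B_{R_2}(d)$ is
$$dS=R_d^{n-2}\sqrt{R_d^2+R_d'^2}\,\sin^{n-2}\theta_1\,d\theta_1\,d\sigma_{n-2}=\frac{R_2R_d^{n-1}}{s}\sin^{n-2}\theta_1\,d\theta_1\,d\sigma_{n-2}.$$
Here $d\sigma_{n-2}$ is the measure in the remaining angles. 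For $n=2$ that factor is trivial, and the range $[0,\pi]$ is doubled to $[0,2\pi]$.

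\textbf{Step 2 ($V_2$ and $V_1$).} For $V_2$, substitute $\theta_1\mapsto\pi-\theta_1$. Both $\sin^n\theta_1$ and $s(\theta_1)$ are invariant, while $\cos\theta_1$ changes sign, so the integrand is odd about $\pi/2$ and $V_2(d)=0$.

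For $V_1$, I write $x'=(x_1,\dots,x_{n-1})$ and use $|x'|^2=R_d^2\sin^2\theta_1$ on $\partial B_{R_2}(d)$. Step 1 then shows that, up to a dimensional constant $c_n$,
$$V_1(d)=c_n\int_{\partial B_{R_2}(d)}|x'|^2\,dS .$$
The function $|x'|^2$ does not depend on $x_n$, and $B_{R_2}(d)$ is a translate of $B_{R_2}$ in the $x_n$ direction. Therefore this integral is independent of $d$, and $V_1(d)=V_1(0)$.

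Equivalently, I can argue directly: expand $\sqrt{R_d^2+R_d'^2}=R_2\bigl(1+d\cos\theta_1/s\bigr)$. The odd part integrates to zero by the same reflection used for $V_2$, and the remaining part can be compared explicitly with $d=0$.

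\textbf{Step 3 ($V_3$, the main obstacle).} By Step 1, the integrand of $V_3$ equals $\sin^2\theta_1\,R_d^{2-2n}$ times the density of $dS$. Since $\sin^2\theta_1=|x'|^2/|x|^2$, this gives
$$V_3(d)=c_n\int_{\partial B_{R_2}(d)}g\,dS,\qquad g(x)=\frac{|x'|^2}{|x|^{2n}},$$
so $V_3(d)$ is $c_n|\partial B_{R_2}|$ times the spherical mean $F(d)$ of $g$ over $\partial B_{R_2}(y_d)$. Because $\overline{B_{R_1}}\subset B_{R_2}(d)$, these spheres avoid the singularity at $0$.

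I would first try to show $F'(d)\ge 0$ for $d\in(0,R_2-R_1)$, since $F$ is even in $d$ by reflection in $x_n$. The route is to differentiate under the integral in the $\theta_1$ form, using $\partial_dR_d=(R_d\cos\theta_1-d)/s$. The target is to rewrite $F'(d)$, via the reflection $\theta_1\mapsto\pi-\theta_1$, as the integral of a manifestly nonnegative quantity that vanishes identically only when $d=0$.

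As a fallback, I would compute $\Delta g$ away from the origin. If $g$ is subharmonic on $\mathbb{R}^n\setminus\{0\}$, then $d\mapsto F(d)$ is subharmonic along the $x_n$-axis within the admissible range. Combined with evenness and a one-variable convexity argument, this would give $F(d)\ge F(0)$. Strictness for $d\neq0$ would come from strict positivity of the derivative or of $\Delta g$.

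Controlling the sign of this derivative is the delicate point, and it is precisely where \cite{ftouhi2022place} carries out the computation. If the direct sign argument resists, I would reproduce their explicit estimate.
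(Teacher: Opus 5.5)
\textbf{Parts (1), (2) and the reduction of $V_3$.} These are correct. For (1) you give a self-contained proof, whereas the paper proves nothing here and only cites \cite{ftouhi2022place}. The identity $V_1(d)=|\mathbb{S}^{n-2}|^{-1}\int_{\partial B_{R_2}(d)}|x'|^2\,dS$, combined with the invariance of $|x'|^2$ under translations in $x_n$, settles it. You should drop the ``equivalently'' variant: $R_d^n$ is not invariant under $\theta_1\mapsto\pi-\theta_1$, so the parity argument does not apply to that product. Your rewriting $V_3(d)=|\mathbb{S}^{n-2}|^{-1}\int_{\partial B_{R_2}(d)}g\,dS$ with $g=|x'|^2|x|^{-2n}$ is also correct.

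\textbf{The gap is part (3).} This is the only nontrivial claim, and your proposal does not prove it. The derivative route is only announced, never carried out. The subharmonicity fallback does not work. It is true that
\[
\Delta g=|x|^{-2n-2}\bigl(2n(n-2)|x'|^2+2(n-1)|x|^2\bigr)>0,
\]
so the mean $M(y)$ of $g$ over $\partial B_{R_2}(y)$ is subharmonic in $y$ for $|y|<R_2$. But $g$ is not radial, so this gives no convexity of $d\mapsto M(y_d)$. It only shows that the average of $M$ over $\{|y|=d\}$ increases with $d$.

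That average is not the quantity you need. For $R_2=1$, a second-order expansion gives the average as $M(0)\bigl(1+(n-1)d^2\bigr)+O(d^4)$. On the axis, $M(y_d)=M(0)\bigl(1+\frac{n^2}{n+2}d^2\bigr)+O(d^4)$, which is strictly smaller for $n\ge 3$. So the anisotropic part of $M$ works against you on the axis, and $\Delta g\ge0$ alone cannot yield the inequality. A pointwise convexity argument also fails, since $\partial_{x_n}^2 g<0$ near $\{x_n=0\}$.

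\textbf{One way to close the gap.} Build on your surface-integral form, but parametrize $\partial B_{R_2}(d)$ by the polar angle $\phi$ about its own center. Then $|x'|=R_2\sin\phi$ and $|x|^2=R_2^2+2dR_2\cos\phi+d^2$. Set $t=d/R_2\in[0,1)$ and use the Gegenbauer generating function $(1-2xt+t^2)^{-n}=\sum_k C^{(n)}_k(x)t^k$ at $x=-\cos\phi$; it converges uniformly for $t<1$. This gives
\[
V_3(d)=R_2^{1-n}\int_0^\pi \frac{\sin^n\phi}{(1+2t\cos\phi+t^2)^{n}}\,d\phi
=R_2^{1-n}\sum_{k\ge 0}t^k\int_{-1}^{1}(1-x^2)^{\frac{n-1}{2}}\,C^{(n)}_k(x)\,dx .
\]
\begin{itemize}
\item Odd $k$ contribute zero by parity.
\item For even $k$ the integral is positive. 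Indeed, $C^{(n)}_k$ is a combination of the $C^{(n/2)}_{k-2m}$ with positive connection coefficients, because $n>n/2$. Against the weight $(1-x^2)^{(n-1)/2}$, only the constant term $C^{(n/2)}_0=1$ survives by orthogonality.
\end{itemize}
Hence $V_3(d)=R_2^{1-n}\sum_{j\ge0}a_j t^{2j}$ with $a_0=I_n$ and all $a_j>0$. This yields $V_3(d)>V_3(0)$ for $0<d<R_2-R_1$, with equality only at $d=0$.

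If you prefer not to do this, state plainly that (3) is taken from \cite{ftouhi2022place}, as the paper does. In that case remove the subharmonicity argument, which is incorrect as stated.
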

\noindent

\section{Optimality of Concentric Annuli for the First Steklov--Neumann Eigenvalue}

 In this section, we give the proof of Theorem \ref{theorem: optimal}.

\begin{proof} [Proof of Theorem \ref{theorem: optimal}]
Let $f \in H^1(\mathbb{R}^n \setminus \overline{B_{R_1}})$ be the  admissible test function defined in ~\eqref{test function} for  $\mu_1(\Omega_d)$. Then, using the variational characterization, we have
\begin{align*}
     \mu_1(\Omega_d) \leq 
    \frac{\displaystyle \int_{\Omega_d} |\nabla f|^2 \, dx}
         {\displaystyle \int_{\partial B_{R_2}(d)} f^2 \, dS}.
\end{align*}
To prove the Theorem, it suffices to prove the following two inequalities:
\begin{align}
    \int_{\Omega_{d}} |\nabla f|^2 \, dV 
    &\leq 
    \int_{\Omega_{0}} |\nabla f|^2 \, dV, 
    \label{numerator inequality formal}
    \\[4pt]
    \int_{\partial B_{R_2}(d)} f^2 \, dS 
    &\geq 
    \int_{\partial B_{R_2}} f^2 \, dS.
    \label{denominator inequality formal}
\end{align}

\medskip
\noindent
We first prove ~\eqref{numerator inequality formal}.  
For $n = 2$, the gradient of $f$ in polar coordinates $(r, \theta_1)$ is given by
\[
\nabla f(r, \theta_1)
= 
\begin{bmatrix}
\sin \theta_1 \left( 1 - \dfrac{R_1^2}{r^2} \right) \\[4pt]
\cos \theta_1 \left( 1 + \dfrac{R_1^2}{(n-1)r^2} \right)
\end{bmatrix}.
\]
Hence,

{\small
\begin{align*}
    \int_{\Omega_d}& |\nabla f|^2 \, dV=\\
    &= 
    \int_{0}^{2\pi}\!\!\int_{R_1}^{R_d(\theta_1)} 
    \Big[
        \sin^2 \theta_1 \!\left( 1 - \tfrac{R_1^2}{(n-1)r^2} \right)^{\!2}
        + 
        \cos^2 \theta_1 \!\left( 1 - \tfrac{R_1^2}{(n-1)r^2} \right)^{\!2}
    \Big]
    r \, dr \, d\theta_1 \\
    &=
    \int_{0}^{2\pi}
    \Bigg[
        \frac{R_d^2(\theta_1) - R_1^2}{2}
        + 
        \frac{2R_1^2}{n-1} (1 - 2\sin^2 \theta_1)
        \ln\!\left( \frac{R_d(\theta_1)}{R_1} \right) 
        - 
        \frac{R_1^4}{2(n-1)} 
        \!\left( \frac{1}{R_d^2(\theta_1)} - \frac{1}{R_1^2} \right)
    \Bigg]
    d\theta_1.
\end{align*}}

\medskip
For $n \geq 3$, we have
\[
\nabla f(r, \theta_1, \dots, \theta_{n-1})
= \begin{bmatrix}
\displaystyle \frac{\partial f}{\partial r} \\[4pt]
\displaystyle \frac{1}{r}\frac{\partial f}{\partial \theta_1} \\[4pt]
\displaystyle \frac{1}{r \sin \theta_1}\frac{\partial f}{\partial \theta_2} \\[4pt]
\vdots \\[4pt]
\displaystyle \frac{1}{r \sin \theta_1 \cdots \sin \theta_{n-2}}\frac{\partial f}{\partial \theta_{n-1}}
\end{bmatrix}
=
\begin{bmatrix}
\sin \theta_1 \cos \theta_2 \left( 1 - \dfrac{R_1^n}{r^n} \right) \\[4pt]
\cos \theta_1 \cos \theta_2 \left( 1 + \dfrac{R_1^n}{(n-1)r^n} \right) \\[4pt]
- \sin \theta_2 \left( 1 + \dfrac{R_1^n}{(n-1)r^n} \right) \\[4pt]
0 \\[4pt]
\vdots \\[4pt]
0
\end{bmatrix}.
\]
Then, following a similar calculation that the reader can find in~\cite{ftouhi2022place}, we obtain

\[
\begin{aligned}
\int_{\Omega_d} |\nabla f|^2 \, dV
&= 
\frac{2}{(n-1)^2} 
\prod_{k=0}^{n-3} I_k
\int_{0}^{\pi}
\Bigg[
\sin^{n-2}\!\theta_1 \,
\frac{(n-1)^2}{n}
\big( R_d^n(\theta_1) - R_1^n \big)
\\[4pt]
&\quad
+ 2R_1^n 
\ln\!\left( \frac{R_d(\theta_1)}{R_1} \right)
\!\big( -n\sin^n\!\theta_1 + (n-1)\sin^{n-2}\!\theta_1 \big)
\\[4pt]
&\quad
- \frac{R_1^{2n}}{n}
\!\left( \frac{1}{R_d^n(\theta_1)} - \frac{1}{R_1^n} \right)
\!\big( (n-2)\sin^n\!\theta_1 + (n-1)\sin^{n-2}\!\theta_1 \big)
\Bigg]
d\theta_1,
\end{aligned}
\]
where $I_k = \int_0^{\pi} \sin^k t \, dt$.  
Hence,
\[
    \int_{\Omega_d} |\nabla f|^2 \, dV
    =
    \frac{2}{(n-1)^2} \prod_{k=0}^{n-3} I_k
    \left[
        \frac{(n-1)^2}{n}A_1(d)
        + 2R_1^n A_2(d)
        - \frac{R_1^{2n}}{n}A_3(d)
    \right],
\]
where $A_1(d), A_2(d), A_3(d)$ are defined in~\eqref{A comparison}.  
Applying Lemma~\ref{A comparison}, we deduce
\[
    \int_{\Omega_{d}} |\nabla f|^2 \, dV 
    \leq 
    \int_{\Omega_{0}} |\nabla f|^2 \, dV.
\]

\medskip
\noindent
Then, to establish~\eqref{denominator inequality formal},
if $\theta= (\theta_1,...,\theta_{n-1})$ and $Q=[0,\pi]^{n-1}$, we know that
\begin{equation*}
\int_{\partial B_{R_2}(d)} f^2 \, dS
=2 \int_{Q}
f^2(r, \theta)
\, R_d(\theta_1)^{n-2}
\prod_{i=1}^{n-2} \sin^{n-1-i}\!\theta_i
\sqrt{R_d(\theta_1)^2 + \big(R_d'(\theta_1)\big)^2}
\, d\theta,
\end{equation*}
where $d\theta= d\theta_1\cdots d\theta_{n-1}$.
Using the explicit expression of $f$ and following a similar calculation as in~\cite{ftouhi2022place}, we obtain
\begin{align}
      \int_{\partial B_{R_2}(d)} f^2 \, dS 
    = 
    2 \prod_{k=2}^{n-1} 
    \left(
        V_1(d)
        + \frac{2R_1^n R_2}{n-1} 
        ( I_n + V_2(d) )
        + \frac{R_1^{2n}}{R_2(n-1)^2} V_3(d)
    \right),
\end{align}
where $V_1(d), V_2(d), V_3(d)$ are as defined previously.  
Then, by Lemma~\ref{V comparison}, we obtain
\begin{align*}
    \int_{\partial B_{R_2}(d)} f^2 \, dS 
    \geq 
    \int_{\partial B_{R_2}} f^2 \, dS.
\end{align*}

\medskip
\noindent
Finally, combining~\eqref{numerator inequality formal} and~\eqref{denominator inequality formal}, we get
\begin{align*}
    \mu_1(\Omega_{d})
    \leq 
    \frac{\displaystyle \int_{\Omega_{d}} |\nabla f|^2 \, dx}
         {\displaystyle \int_{\partial B_{R_2}(d)} f^2 \, dS}
    \leq
    \frac{\displaystyle \int_{\Omega_{0}} |\nabla f|^2 \, dx}
         {\displaystyle \int_{\partial B_{R_2}} f^2 \, dS}
    = 
    \mu_1(\Omega_{0}),
\end{align*}
with equality if and only if $d = 0$.  
This completes the proof.
\end{proof}

\section{Bounds for Steklov--Neumann eigenvalues}
Throughout this section, we consider $\tilde{\Omega} = \tilde{\Omega}_{\mathrm{out}} \setminus \overline{B_{R_1}}$ to be a star-shaped domain with a spherical hole, where $B_{R_1} \subset M$ is the geodesic ball of radius $R_1$ centered at a point $p$ such that $\overline{B_{R_1}} \subset\tilde{\Omega}_\mathrm{out}.$ We denote by $B_{R_m}$ and $B_{R_M}$ the geodesic balls in $M$, also centered at $p$, with radii $R_m$ and $R_M$, defined in \ref{inradius}, respectively.

\subsection{Useful results}
Let $w$ be a continuously differentiable function defined on the annular domain $B_{R_M}\setminus  \overline{B_{R_1}}$ in $M$. 
Then we have   
\begin{align*}
    \|\nabla w\|^2 \;=\; \left(\frac{\partial w}{\partial r}\right)^2 
\;+\; \frac{1}{h^2(r)} \,\|\overline{\nabla}w\|^2,
\end{align*}
where $\overline{\nabla}w$ denotes the component of $\nabla w$ tangent to $\mathbb{S}^{n-1}$.  

\begin{proposition} \label{upper bound inequality}
    Let $w$ be a continuously differentiable function defined on 
    $B_{R_M}\setminus \overline{B_{R_1}}$. Then the two following inequalities hold:
    
        \begin{align} \label{first}
        \int_{\tilde{\Omega}} \|\nabla w\|^2 \, dV 
        \;\leq\; \int_{B_{R_M}\setminus \overline{B_{R_1}}} \|\nabla w\|^2 \, dV, 
        \end{align}

       \begin{align}\label{second}
           \int_{\partial \tilde{\Omega}_{out}} w^2 \, dS
        \;\geq\; \frac{h^{n-1}(R_m)}{h^{n-1}(R_M)} 
        \int_{\partial B_{R_M}} w^2 \, dS.  \end{align}

    \begin{proof}
       In order to prove the  inequality \eqref{first}, using the 
            polar coordinates, we have
            \begin{align*}
                \int_{\tilde{\Omega}} \|\nabla w\|^2 \, dV 
                &= \int_{U_pM} \int_{R_1}^{r_u} 
                \Bigg[ \Big(\frac{\partial w}{\partial r}\Big)^2 
                + \frac{1}{h^2(r)} \|\overline{\nabla}w\|^2 \Bigg] 
                h^{\,n-1}(r)\, dr \, du,
            \end{align*}
     and, since $r_u<R_M$, we have  
            \begin{align*}
                \int_{\tilde{\Omega}} \|\nabla w\|^2 \, dV
                &\leq \int_{U_pM} \int_{R_1}^{R_M} 
                \Bigg[ \Big(\frac{\partial w}{\partial r}\Big)^2 
                + \frac{1}{h^2(r)} \|\overline{\nabla}w\|^2 \Bigg] 
                h^{\,n-1}(r)\, dr \, du \\
                &= \int_{B_{R_M}\setminus \overline{B_{R_1}}} 
                \|\nabla w\|^2 \, dV,
            \end{align*}
            which proves the claim.

  Let us prove  now \eqref{second}.  Recall that for every $q\in \partial \tilde{\Omega}_\mathrm{out},$ there esist a unique $u\in U_pM$ such that $q=exp_p(r_uu).$ We use the notation $\theta_u$ in place of $\theta(q)$ and the representation of the surface 
            measure in polar coordinates $dS=\sec (\theta_u) h^{n-1}(r_u) du$ (see \cite[p.~385]{marsden1993basic}). This gives
            \begin{align*}
                \int_{\partial \tilde{\Omega}_{out}} w^2 \, dS 
                &= \int_{U_pM} w^2 \, \sec(\theta_u)\, h^{\,n-1}(r_u)\, du.
            \end{align*}
            Since $\sec(\theta_u) \geq 1, r_u \geq R_m$, and $h(r)$ is an increasing function of $r,$ it follows that
            \begin{align*}
                \int_{\partial \tilde{\Omega}_{out}} w^2 \, dS 
                &\geq \int_{U_pM} w^2 \, h^{\,n-1}(R_m)\, du \\
                &= \frac{h^{\,n-1}(R_m)}{h^{\,n-1}(R_M)} 
                \int_{U_pM} w^2 \, h^{\,n-1}(R_M)\, du \\
                &= \frac{h^{\,n-1}(R_m)}{h^{\,n-1}(R_M)} 
                \int_{\partial B_{R_M}} w^2 \, dS.
            \end{align*}
            This concludes the proof.
      
    \end{proof}
\end{proposition}

To establish the lower bound, we need the following proposition. 
\begin{proposition}\label{lower bound inequality prop}
Let $\tilde{w}$ be a continuously differentiable function defined on $\tilde{\Omega}$.
   Then the following two inequalities hold:
    \begin{equation}
        \displaystyle \int_{B_{R_m}\setminus \overline{B_{R_1}}} \|\nabla \tilde{w}\|^2 \, dV 
        \;\leq\; \int_{\tilde{\Omega}} \|\nabla \tilde{w}\|^2 \, dV, 
    \end{equation}
 
\begin{equation}
    \displaystyle \int_{\partial B_{R_M}} \tilde{w}^2 \, dS
        \;\geq\; \frac{\sqrt{1+a} \ h^{n-1}(R_m)}{h^{n-1}(R_M)} 
        \int_{\partial \tilde{\Omega}_{out}} \tilde{w}^2 \, dS,
\end{equation}
where constant $a$ is defined in \eqref{eqn: constt a}.
\end{proposition}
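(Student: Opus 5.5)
The first inequality is a pure domain-monotonicity statement, and I would prove it exactly as \eqref{first} in Proposition~\ref{upper bound inequality}. In geodesic polar coordinates about $p$,
\begin{align*}
\int_{\tilde{\Omega}} \|\nabla \tilde w\|^2\, dV=\int_{U_pM}\int_{R_1}^{r_u}\Big[\Big(\frac{\partial \tilde w}{\partial r}\Big)^2+\frac{1}{h^2(r)}\|\overline{\nabla}\tilde w\|^2\Big]h^{n-1}(r)\,dr\,du .
\end{align*}
The integrand is nonnegative and $r_u\ge R_m$ for every $u\in U_pM$ by \eqref{inradius}. Shrinking the upper limit of the radial integral from $r_u$ to $R_m$ therefore decreases the integral. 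The result is precisely $\int_{B_{R_m}\setminus\overline{B_{R_1}}}\|\nabla\tilde w\|^2\,dV$, which gives the first claim.

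For the boundary inequality I would again use the surface measure $dS=\sec(\theta_u)\,h^{n-1}(r_u)\,du$ on $\partial\tilde\Omega_{out}$. Points of $\partial\tilde\Omega_{out}$ and of $\partial B_{R_M}$ are matched along the same ray $u$, with the same convention as in the proof of \eqref{second}: $\tilde w^2$ is regarded as a function of the angular variable $u$ only. In practice the proposition is applied to the radially transplanted eigenfunction, which justifies this. We have
\[
\int_{\partial B_{R_M}}\tilde w^2\,dS=h^{n-1}(R_M)\int_{U_pM}\tilde w^2\,du .
\]
From \eqref{eqn: constt a}, $\tan^2\theta_u\le a$, so $\sec\theta_u\le\sqrt{1+a}$; the upper bound $\sec\theta_u\ge 1$ is also available. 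Since $h$ is increasing and $R_m\le r_u\le R_M$, the weight $\sec(\theta_u)h^{n-1}(r_u)$ lies between $h^{n-1}(R_m)$ and $\sqrt{1+a}\,h^{n-1}(R_M)$. Inserting these pointwise bounds into $\int_{\partial\tilde\Omega_{out}}\tilde w^2\,dS$ and comparing with the displayed formula gives the comparison between the two boundary integrals. The final step is to factor out $h^{n-1}(R_M)$ and rewrite the constant in the stated form $\sqrt{1+a}\,h^{n-1}(R_m)/h^{n-1}(R_M)$.

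The main obstacle is matching the direction of the constant. The chain of bounds directly yields
\[
\int_{\partial\tilde\Omega_{out}}\tilde w^2\,dS\le\sqrt{1+a}\int_{\partial B_{R_M}}\tilde w^2\,dS,
\]
that is, the factor $1/\sqrt{1+a}$ in front of $\int_{\partial\tilde\Omega_{out}}$. The stated factor $\sqrt{1+a}\,h^{n-1}(R_m)/h^{n-1}(R_M)$ follows from this only when $(1+a)\,h^{n-1}(R_m)\le h^{n-1}(R_M)$.

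If that condition is not automatic, I would first test the statement on a concentric ball, where $a=0$ and $R_m=R_M$; there both versions reduce to equality. Then I would try to obtain the extra factor by a sharper estimate of $\sec\theta_u$ in terms of $r_u$. I expect that the intended constant in the statement actually comes from combining these two bounds. I would state the proof with whichever version of the constant the chain truly produces, and check this against its later use in Theorem~\ref{thm:upper-lower}.
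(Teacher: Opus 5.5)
Your proof of the first inequality is the paper's argument. For the boundary term you use the same polar-coordinate computation that the paper invokes by pointing to Proposition~\ref{upper bound inequality} and \cite{Basak_Chorwadwala_Verma_2025}: $dS=\sec(\theta_u)h^{n-1}(r_u)\,du$, $\sec\theta_u\le\sqrt{1+a}$, and $h$ increasing.

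The proposal stops short on the second inequality. You leave open whether a sharper estimate of $\sec\theta_u$ could produce the printed constant. It cannot, because the printed inequality is false.

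Take $\tilde w\equiv 1$, which is compatible with any way of matching values along rays. Since $\sqrt{1+a}\ge 1$, the printed inequality would give $\operatorname{Vol}(\partial\tilde\Omega_{out})\le \frac{h^{n-1}(R_M)}{h^{n-1}(R_m)}\operatorname{Vol}(\partial B_{R_M})$, a bound in terms of $R_m,R_M$ alone. Now work in $\mathbb{R}^2$ ($h(r)=r$) with the smooth star-shaped domain $\{r<1+\varepsilon\sin(k\phi)\}$, where $0<\varepsilon<1/2$ and $R_1=1/2$. For every integer $k$ it has $R_m=1-\varepsilon$ and $R_M=1+\varepsilon$. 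Its perimeter, however, is at least $\int_0^{2\pi}k\varepsilon|\cos(k\phi)|\,d\phi=4k\varepsilon\to\infty$. So with $R_m,R_M$ fixed, a valid constant must go to zero as the oscillation, and hence $a$, grows. Your $1/\sqrt{1+a}$ does this; the printed constant grows instead.

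The statement is misprinted. What the lower bound of Theorem~\ref{thm:upper-lower} needs is a comparison with $\partial B_{R_m}$, since its test function lives on $B_{R_m}\setminus\overline{B_{R_1}}$ and the Rayleigh denominator is over $\partial B_{R_m}$:
\[
\int_{\partial B_{R_m}}\tilde w^2\,dS\;\ge\;\frac{h^{n-1}(R_m)}{\sqrt{1+a}\,h^{n-1}(R_M)}\int_{\partial\tilde\Omega_{out}}\tilde w^2\,dS .
\]
Your chain proves this at once from two facts: $\int_{\partial\tilde\Omega_{out}}\tilde w^2\,dS\le\sqrt{1+a}\,h^{n-1}(R_M)\int_{U_pM}\tilde w^2\,du$, and $\int_{\partial B_{R_m}}\tilde w^2\,dS=h^{n-1}(R_m)\int_{U_pM}\tilde w^2\,du$. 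This also explains the factor $h^{n-1}(R_m)/h^{n-1}(R_M)$ in the printed constant. You should state and prove this version explicitly, rather than ``whichever version the chain produces''.

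Second, the ray-matching convention is a genuine hypothesis, and your justification for it does not match the paper. The lower bound is proved with $\tilde\psi=\tilde\psi_1-\overline{\tilde\psi_1}$, where $\tilde\psi_1$ is the eigenfunction on $\tilde\Omega$ itself, not a radial transplant. For a function that is not constant along geodesic rays from $p$, the boundary comparison fails in every version. For example, $\tilde w=d(p,\cdot)-R_m$ gives $\int_{\partial B_{R_m}}\tilde w^2\,dS=0<\int_{\partial\tilde\Omega_{out}}\tilde w^2\,dS$ unless $\tilde\Omega_{out}=B_{R_m}$. In the printed version, $\tilde w$ is not even defined on $\partial B_{R_M}$. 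The paper's proof of \eqref{second} makes the same silent identification.

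So the corrected inequality should be stated for functions constant along rays, or equivalently for traces transported along rays. Using it in Theorem~\ref{thm:upper-lower} then requires an actual radial transplantation of the eigenfunction, with its effect on the Dirichlet integral controlled. The paper's sketch does not address this.
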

The proof is same as that of Proposition~\ref{upper bound inequality}  and  Theorem~2.1 in~\cite{Basak_Chorwadwala_Verma_2025}.

\subsection{Proof of  Theorem \ref{thm:upper-lower}}
\begin{proof} [Proof of the upper bound]
Let $\psi_1$ be the first eigenfunction corresponding to $\mu_1\!\left(B_{R_M}\setminus \overline{B_{R_1}}\right)$.  
We define a function $\psi$ on $B_{R_M}\setminus\overline{B_{R_1}}$ as
\begin{align*}
   \psi := \psi_1 - \overline{\psi_1}, 
\qquad 
\overline{\psi_1} = \frac{\displaystyle \int_{\partial \tilde{\Omega}_{out}} \psi_1 \, dS}{\operatorname{Vol}(\partial \tilde{\Omega}_{out})}. 
\end{align*}
Since $\int_{\partial\tilde{\Omega}_{out}} \psi dS=0,$ function $\psi$ can be used as a test function for the variational characterization of $\mu_1(\tilde{\Omega}).$
Applying Proposition~\ref{upper bound inequality}, we obtain
\begin{align} \label{inequality for upper bound}
       \mu_1(\tilde{\Omega})\leq \frac{\displaystyle \int_{\tilde{\Omega}}\|\nabla\psi\|^2 \, dV}{\displaystyle \int_{\partial\tilde{\Omega}_{out}}\psi^2 \, dS} 
        \;\leq\; 
        \frac{h^{n-1}(R_M)}{h^{n-1}(R_m)} 
        \frac{\displaystyle \int_{B_{R_M}\setminus \overline{B_{R_1}}}\|\nabla\psi\|^2 \, dV}{\displaystyle \int_{\partial B_{R_M}}\psi^2 \, dS}.
\end{align}
Since $\overline{\psi_1}$ is constant, then
\begin{align}
    \int_{B_{R_M}\setminus \overline{B_{R_1}}} \|\nabla \psi\|^2 \, dV
    = \int_{B_{R_M}\setminus \overline{B_{R_1}}} \|\nabla \psi_1\|^2 \, dV. \label{equality 1}
\end{align}
For the denominator, we compute
\begin{align*}
    \int_{\partial B_{R_M}} \psi^2 \, dS 
    &= \int_{\partial B_{R_M}} \big(\psi_1^2 + \overline{\psi_1}^2 - 2\psi_1 \overline{\psi_1}\big) \, dS \\
    &= \int_{\partial B_{R_M}} \psi_1^2 \, dS 
       + \overline{\psi_1}^2 \, \operatorname{Vol}(\partial B_{R_M})
       - 2 \overline{\psi_1}\int_{\partial B_{R_M}} \psi_1 \, dS.
\end{align*}
By definition, $\psi_1$ is a zero-mean function on $B_{R_M}$, and therefore the last term vanishes. Hence, we get
\begin{align*}
    \int_{\partial B_{R_M}} \psi^2 \, dS 
    = \int_{\partial B_{R_M}} \psi_1^2 \, dS 
      + \overline{\psi_1}^2 \, \operatorname{Vol}(\partial B_{R_M})
    \;\geq\; \int_{\partial B_{R_M}} \psi_1^2 \, dS. 
\end{align*}
Therefore, from~\eqref{inequality for upper bound} we obtain
\begin{align*}
    \frac{\displaystyle \int_{\tilde{\Omega}} \|\nabla\psi\|^2 dV}{\displaystyle \int_{\partial\tilde{\Omega}_{out}} \psi^2 dS}
    \;\leq\;
    \frac{h^{n-1}(R_M)}{h^{n-1}(R_m)} 
    \cdot
    \frac{\displaystyle \int_{B_{R_M}\setminus \overline{B_{R_1}}} \|\nabla\psi_1\|^2 dV}{\displaystyle \int_{\partial B_{R_M}} \psi_1^2 dS}
    \;=\;
    \frac{h^{n-1}(R_M)}{h^{n-1}(R_m)} 
    \,\mu_1(B_{R_M}\setminus \overline{B_{R_1}}).
\end{align*}
Hence,
\begin{align*}
   \mu_1(\tilde{\Omega})\;\leq\; \frac{\displaystyle \int_{\tilde{\Omega}} \|\nabla\psi\|^2 dV}{\displaystyle \int_{\partial \tilde{\Omega}_{out}} \psi^2 dS}
\;\leq\;
\frac{h^{n-1}(R_M)}{h^{n-1}(R_m)} \,\mu_1(B_{R_M}\setminus \overline{B_{R_1}}). 
\end{align*}
\end{proof}

\begin{proof} [Proof of the lower bound]
The proof follows using the same strategy as that of the upper bound, except that here we take $\tilde{\psi}_1$ to be the first eigenfunction corresponding to $\mu_1(\tilde{\Omega})$ and use the test function 
$\tilde{\psi} := \tilde{\psi}_1 - \overline{\tilde{\psi}_1}$, with  
$\overline{\tilde{\psi}_1} = \dfrac{\int_{\partial B_{R_m}} \tilde{\psi}_1 \, dS}{\operatorname{Vol}(\partial B_{R_m})}$ for the variational characterization of $\mu_1(B_{R_m}\setminus \overline{B_{R_1}})$. After an application of Proposition \ref{lower bound inequality prop}, 
the rest of the argument is identical to the proof of the upper bound.

\end{proof}

\subsection{Why do we need starshapedness?}



Theorem \ref{thm:upper-lower} ensures that, whenever 
$\tilde{\Omega}_{\mathrm{out}}$ is an open, bounded, star-shaped domain and $R_1>0$, the quantity 
$\mu_1(\tilde{\Omega})$ is bounded away from zero. A natural question is whether this conclusion continues to hold for an arbitrary bounded domain in $\mathbb{R}^n$
 with Lipschitz boundary. The answer is negative, even when $\tilde{\Omega}_{\mathrm{out}}$ is assumed to be connected. This fact is illustrated by the following two-dimensional counterexample, that can be found in \cite{girouard2017spectral} in the Steklov setting in the non perforated case.
\begin{proposition}
    There exists a sequence $\{\Omega_\varepsilon\}_{\varepsilon}\subseteq \mathbb R^2$ of open, bounded sets with Lipschitz boundary, such that $\mu_1(\Omega_\varepsilon)\to 0$, as $\varepsilon\to 0^+$.
\end{proposition}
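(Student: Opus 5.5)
We construct a planar domain with a thin channel joining two pieces of the Steklov boundary; this mimics the dumbbell counterexample of \cite{girouard2017spectral} for the pure Steklov problem. Fix a small ball $B_{R_1}$ centered at the origin, which will be the Neumann hole. For $\varepsilon>0$ let
\[
\Omega_{\mathrm{out},\varepsilon}=D_1\cup D_2\cup T_\varepsilon ,
\]
where $D_1=B_1(-3e_1)$ and $D_2=B_1(3e_1)$ are two unit discs and $T_\varepsilon=(-2,2)\times(-\varepsilon,\varepsilon)$ is a thin rectangle joining them. To keep the hole inside the domain we place it in $D_1$: replace $D_1$ by $B_1(-3e_1)$ and let the hole be $B_{R_1}(-3e_1)$ with $R_1<1/2$. (Equivalently, translate the whole picture so that the hole is at the origin.) Set $\Omega_\varepsilon=\Omega_{\mathrm{out},\varepsilon}\setminus\overline{B_{R_1}}$. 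For each $\varepsilon$ this is a bounded, connected, doubly connected Lipschitz domain, not star-shaped with respect to the hole's center.

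The test function is the standard one for dumbbells. Let
\[
v=\begin{cases}1 & \text{on } D_1\setminus\overline{B_{R_1}},\\ -c & \text{on } D_2,\end{cases}
\]
interpolated linearly in $x_1$ across the channel $T_\varepsilon$. The constant $c>0$ is chosen so that $\int_{\Gamma_2^\varepsilon} v\,dS=0$, where $\Gamma_2^\varepsilon=\partial\Omega_{\mathrm{out},\varepsilon}$ is the Steklov boundary. This is possible by continuity, because $v>0$ on the part of $\Gamma_2^\varepsilon$ in $D_1$ and $v<0$ on the part in $D_2$. By symmetry of the construction $c\to 1$ as $\varepsilon\to0$, and in any case $c$ stays bounded between two positive constants. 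Then $v\in H^1(\Omega_\varepsilon)$ is admissible in \eqref{characterization}.

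Next we estimate the Rayleigh quotient. The gradient is supported in $T_\varepsilon$, where $|\nabla v|=(1+c)/4$, so
\[
\int_{\Omega_\varepsilon}|\nabla v|^2\,dx=\frac{(1+c)^2}{16}\,|T_\varepsilon|=\frac{(1+c)^2}{2}\,\varepsilon\longrightarrow 0 .
\]
For the denominator it is enough to keep only the boundary arcs lying in $D_1$. These have length at least $2\pi-O(\varepsilon)$, and $v=1$ there, so $\int_{\Gamma_2^\varepsilon}v^2\,dS\ge \pi$ for small $\varepsilon$. Therefore $\mu_1(\Omega_\varepsilon)\le C\varepsilon\to 0$.

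The only delicate point is the admissibility constraint, namely the choice of $c$ and its uniform control. This is handled by the symmetry and continuity argument above. Everything else is an elementary computation, and Lipschitz regularity is clear, possibly after rounding the corners of the junctions.
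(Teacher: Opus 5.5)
Your argument is correct in substance, but it uses a different mechanism from the paper's.

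\textbf{The paper's route.} The paper takes a very short, extremely thin neck $C_\varepsilon$ of size $\varepsilon\times\varepsilon^3$ and a test function $\sin(2\pi x/\varepsilon)$ supported in the neck alone. Both the energy ($2\pi^2\varepsilon^2$) and the boundary mass ($\varepsilon$, carried by the two long sides of $C_\varepsilon$) live inside the neck. The mean-zero condition holds by oddness, and the quotient is $2\pi^2\varepsilon$. This construction is purely local: the bulbs and the hole play no role, and the same function would work for a thin finger attached at one end. It only needs (width)/(length)$^2\to 0$ for the neck, but it requires the fine-tuned radii so that the circles pass through the corners of the rectangle.

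\textbf{Your route.} You use the classical Cheeger-type dumbbell. The neck has fixed length and width $2\varepsilon$, and the test function is $\pm$constant on the bulbs. So the mass sits on the fixed bulb perimeters and the energy is of order (width)/(length). This relies on the neck nearly disconnecting two substantial pieces of the Steklov boundary. In exchange, it is robust and admissibility is immediate. The Steklov boundary is symmetric under $x_1\mapsto -x_1$ and the Neumann hole does not enter the constraint, so $c=1$ exactly, not merely in the limit.

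\textbf{A detail to repair.} As written, $T_\varepsilon=(-2,2)\times(-\varepsilon,\varepsilon)$ does not meet the open discs $B_1(\pm 3e_1)$, which lie in $\{|x_1|>2\}$. So $D_1\cup T_\varepsilon\cup D_2$ is a disjoint union of three open sets. For such a disconnected set the statement is trivial (locally constant functions give $\mu_1=0$) and is not the intended one. The fix is as follows.
\begin{itemize}
\item Take the neck $(-5/2,5/2)\times(-\varepsilon,\varepsilon)$.
\item Let $v$ depend on $x_1$ only: $v=1$ for $x_1\le -2$, $v=-x_1/2$ for $|x_1|\le 2$, and $v=-1$ for $x_1\ge 2$.
\item Then $\nabla v$ is still supported in $(-2,2)\times(-\varepsilon,\varepsilon)$, so your energy and mass computations are unchanged.
\item The junction corners are transversal, hence Lipschitz.
\item With $R_1<1/2$, the hole $\overline{B_{R_1}(-3e_1)}$ stays disjoint from the neck.
\end{itemize}
Also drop your first description of a hole centered at the origin, since the origin lies in the neck.
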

\begin{proof}
Let $\varepsilon>0$ be small enough. Let us consider  the open  rectangle centered at the origin of sides $\varepsilon$ and $\varepsilon^3$ 
\begin{equation*}
C_{\varepsilon}= \bigg(-\frac{\varepsilon}{2},\frac{\varepsilon}{2}\bigg)\times \bigg(-\frac{\varepsilon^3}{2},\frac{\varepsilon^3}{2}\bigg)
\end{equation*}
 and let us define  the two balls
\begin{equation*}
    B_{i,\varepsilon}=B_{r_i(\varepsilon)}(x_i(\varepsilon)),  \qquad i=1,2,
\end{equation*}
with radii $r_1(\varepsilon)=r_2(\varepsilon)= \sqrt{(1+\varepsilon/2)^2+\varepsilon^6/4}$, centered at the points $x_1(\varepsilon)=(-1-\varepsilon,0)$ and $x_2(\varepsilon)= (1+\varepsilon,0)$, respectively.
Let us consider the following sequence of perforated dumbells $\{\Omega_\varepsilon\}\subset \mathbb{R}^2 $, defined as follows
\begin{equation*}
	\Omega_\varepsilon = B_{1,\varepsilon}\cup C_{\varepsilon}\cup (B_{2,\varepsilon}\setminus \overline{B_{R_1}(x_2(\varepsilon))}),
\end{equation*}
where $B_{R_1}(x_2(\varepsilon))$ is the concentric ball contained in $B_{r_2(\varepsilon)}(x_2(\varepsilon))$ of radius $0<R_1<1$. Let us consider the following continuous function
\begin{equation*}
	v(x,y)= \begin{cases}
	\sin \bigg(\displaystyle\frac{2\pi x}{\varepsilon}\bigg) & \text{in} \, C_{\varepsilon}\\
	0 & \text{elsewhere}.
	\end{cases}
\end{equation*}
Let us stress that $v(x,y)$ is a good test function for the first non zero Steklov--Neumann eigenvalue, since
\begin{equation*}
    \int_{\partial \Omega_\varepsilon\setminus \partial B_{R_1}(x_2(\varepsilon))}v\,d\mathcal{H}^{n-1}  = \int_{\partial C_\varepsilon}v\,d\mathcal{H}^{n-1} = 2\int_{-\frac{\varepsilon}{2}}^{\frac{\varepsilon}{2}}\sin\bigg(\displaystyle\frac{2\pi x}{\varepsilon}\bigg)\,dx = -\frac{\varepsilon}{\pi}\cos \bigg(\displaystyle\frac{2\pi x}{\varepsilon}\bigg)\Bigg|_{-\frac{\varepsilon}{2}}^{\frac{\varepsilon}{2}}=0.
\end{equation*}
The denominator in the Rayleigh quotient of $\mu_1(\Omega_\varepsilon)$ becomes

\begin{equation*}
\begin{split}
\int_{\partial \Omega_\varepsilon}v^2 \,d\mathcal{H}^{n-1}&= \int_{\partial C_\varepsilon}v^2\,d\mathcal{H}^{n-1} = 2\int_{-\frac{\varepsilon}{2}}^{\frac{\varepsilon}{2}}\sin^2 \bigg(\displaystyle\frac{2\pi x}{\varepsilon}\bigg)\,dx\\
&=4\int_0^{\frac{\varepsilon}{2}} \sin^2\bigg(\displaystyle\frac{4\pi x}{\varepsilon}\bigg)\,dx
=4\displaystyle\int_0^{\frac{\varepsilon}{2}}\frac{1-\cos\bigg(\displaystyle\frac{4\pi x}{\varepsilon}\bigg)}{2}\,dx=\varepsilon.
\end{split}
\end{equation*}
Moreover, since
\begin{equation*}
	|\nabla v|^2 = \bigg(\frac{\partial v}{\partial x}\bigg)^2 = \bigg(\frac{2\pi}{\varepsilon}\bigg)^2 \cos^2\bigg(\displaystyle\frac{2\pi x}{\varepsilon}\bigg),
\end{equation*}
The numerator is
\begin{equation*}
\begin{split}
\int_{\Omega_\varepsilon}|\nabla v|^2 \,dx\,dy&= \int_{C_\varepsilon}|\nabla v|^2\,dx\,dy = \bigg(\frac{2\pi }{\varepsilon}\bigg)^2\int_{-\frac{\varepsilon^3}{2}}^{\frac{\varepsilon^3}{2}}\int_{-\frac{\varepsilon}{2}}^{\frac{\varepsilon}{2}} \cos^2\bigg(\displaystyle\frac{4\pi x}{\varepsilon}\bigg)\,dx\,dy \\
&= 2\bigg(\frac{2\pi }{\varepsilon}\bigg)^2\varepsilon^3\int_0^{\frac{\varepsilon}{2}}\frac{1+\cos\bigg(\displaystyle\frac{4\pi x}{\varepsilon}\bigg)}{2}\,dx=2\pi^2\varepsilon^2.
\end{split}
\end{equation*}
In this way, since $v$ is zero on $\partial B_{R_1}(x_2)$, we get
\begin{equation*}
	\mu_1(\Omega_\varepsilon)\le \frac{\displaystyle \int_{\Omega_\varepsilon}|\nabla u|^2\,dx}{\displaystyle \int_{\partial \Omega_\varepsilon}u^2 \,d\mathcal{H}^{n-1}}= 2\pi^2\varepsilon,
\end{equation*}
and eventually
\begin{equation*}
	\mu_1(\Omega_\varepsilon)\to 0, \qquad \qquad \text{as}\;\;\varepsilon\to 0. 
\end{equation*}
    \end{proof}

The previous counterexample provides tell us that the first non zero Steklov--Neumann eigenvalue can become arbitrarily small when the domain is not star-shaped. 

\section{Asymptotic behavior with respect to the radius of the hole}
In this section, we study the behavior of the first non zero Steklov--Neumann eigenvalue and the corresponding eigenfunctions on a bounded domain with a spherical hole, as the radius of the spherical hole approaches to 0. This is the content of the Theorem \ref{thm:convergence}. 


 We denote by $\Omega_r=\Omega_{\mathrm{out}} \setminus\overline{B_r}$, where $\Omega_{\mathrm{out}} \subset\mathbb{R}^n$ is an open, bounded and Lipschitz set and $B_r$ is the ball centered at the origin of radius $r>0$ such that $\overline{B_r}\subset \Omega_{\mathrm{out}}$.
We also denote the concentric annulus in the following way $A_{r,R}=B_R\setminus\overline{B_r}$, with $R>r>0$.
 
In order to prove Theorem \ref{thm:convergence}, we need to consider a particular sequence of eigenfunctions $\{u_1^r\}_r$ corresponding to $\mu_1(\Omega_r)$. Differently from the Steklov-Dirichlet case, in which it sufficed to extend the sequence of eigenfunctions to zero inside the hole, in this case the situation is a bit more delicate, since we need to extend them by harmonicity inside $B_r$. More precisely,  we need control the rate of convergence of the gradient of the harmonic extension $h$ in $L^2(B_r)$  as the radius $r$ approaches zero and use the following technical Lemma, that is an application of \cite[Lemma 12]{girouard2021steklov}. 
\begin{lemma}\label{lem:estimateharmonicgradient}
    For any $0<r<R$, let $u\in C^\infty(\overline{A_{r,R}})$ be such that
    \begin{equation*}
        \begin{cases}
         \Delta u = 0 & \text{in}\;\; A_{r,R},\\
         \partial_\nu u = 0 & \text{in}\;\; \partial B_r,
        \end{cases}
    \end{equation*}
    and let us consider the function $h$, which is the harmonic extension of $u$ in $B_r$, that is
    \begin{equation*}
        \begin{cases}
        \Delta h = 0 &\text{in}\;\; B_r,\\
        h=u &\text{on}\;\; \partial B_r.
        \end{cases}
    \end{equation*}
  Then, as $r/R\to 0$, we have
  \begin{align} \label{eq:gradestimate}
      \int_{B_r}|{\nabla h}|^2\,dx\leq 5 \bigg(\frac{r}{R}\bigg)^n\bigg(1+O\bigg(\frac{r}{R}\bigg)^n\bigg)\int_{A_{r,R}}|{\nabla u}|^2\,dx. 
  \end{align}
\end{lemma}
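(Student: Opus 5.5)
The plan is to expand $u$ in spherical harmonics on the annulus and compare the Dirichlet energies of $u$ and of its harmonic extension $h$ mode by mode. Write, in polar coordinates on $A_{r,R}$,
\[
u(\rho,\theta)=\sum_{l\ge 0}\sum_j \bigl(a_{l}^j\rho^l+b_{l}^j\rho^{2-n-l}\bigr)Y_l^j(\theta)
\]
(with $\log\rho$ replacing $\rho^{2-n}$ for $l=0$, $n=2$). The Neumann condition on $\partial B_r$ forces $b_0=0$ and, for $l\ge1$, $b_l^j=\frac{l}{l+n-2}r^{2l+n-2}a_l^j$, exactly as in the eigenfunctions $w_l^j$ of Subsection~\ref{subsec:radial}. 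Thus each mode is $a_l^j\phi_l(\rho)Y_l^j$ with $\phi_l(\rho)=\rho^l+\frac{l\,r^{2l+n-2}}{(l+n-2)\rho^{l+n-2}}$, and the constant mode contributes nothing to either energy. Since $u$ is only assumed smooth on the closed annulus, the Laurent-type expansion converges in $H^1$, so everything can be computed modewise by orthogonality of the $Y_l^j$.

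For the extension inside the ball, $h=\sum a_l^j\phi_l(r)(\rho/r)^lY_l^j$. Its energy is
\[
\int_{B_r}|\nabla h|^2\,dx=\sum_{l,j}(a_l^j)^2\phi_l(r)^2\,l\,r^{n-2},
\]
using $\int_{\partial B_r}h\,\partial_\rho h$ and normalized $Y_l^j$. Here $\phi_l(r)=r^l\frac{2l+n-2}{l+n-2}$. On the annulus, Green's formula together with the Neumann condition on $\partial B_r$ gives
\[
\int_{A_{r,R}}|\nabla u|^2\,dx=\sum(a_l^j)^2R^{n-1}\phi_l(R)\phi_l'(R),
\]
with $\phi_l(R)\phi'_l(R)=R^{2l-1}\bigl(l-\tfrac{l^2}{(l+n-2)^2}t^{2(2l+n-2)}+\dots\bigr)$, where $t=r/R$. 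More precisely,
\[
\phi_l(R)\phi'_l(R)=R^{2l-1}\,l\,(1+ct^{2l+n-2})(1-t^{2l+n-2}),\qquad c=\tfrac{l}{l+n-2}.
\]

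Taking the ratio mode by mode gives
\[
\frac{l\,r^{2l+n-2}\bigl(\tfrac{2l+n-2}{l+n-2}\bigr)^2}{l\,R^{2l+n-2}(1+ct^{2l+n-2})(1-t^{2l+n-2})}
= t^{2l+n-2}\Bigl(\tfrac{2l+n-2}{l+n-2}\Bigr)^2\bigl(1+O(t^{2l+n-2})\bigr).
\]
The prefactor $\bigl(\frac{2l+n-2}{l+n-2}\bigr)^2\le 4$ is at most $4$, and $t^{2l+n-2}\le t^{n}$ for $l\ge1$ and $t<1$. The supremum over $l\ge1$ is therefore attained at $l=1$, where the bound is $t^n\bigl(\frac{n}{n-1}\bigr)^2(1+O(t^n))\le 4t^n(1+O(t^n))\le 5t^n(1+O(t^n))$. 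Summing over modes then yields \eqref{eq:gradestimate}. Alternatively, one can simply invoke \cite[Lemma 12]{girouard2021steklov}, which is precisely this computation.

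The main obstacle is the uniformity of the $O(\cdot)$ term in $l$. The remainder factor $\bigl[(1+ct^{k})(1-t^{k})\bigr]^{-1}$ with $k=2l+n-2\ge n$ is bounded by $(1-t^n)^{-1}=1+O(t^n)$ uniformly, since $c\ge 0$. This makes the modewise bound uniform and justifies summation. One must also treat the case $n=2$, where $\frac{n}{n-1}=2$ gives the factor $4$; the stated constant $5$ absorbs this case comfortably.
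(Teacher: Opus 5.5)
Your proof is correct. The paper gives no argument for this lemma beyond saying it is an application of \cite[Lemma 12]{girouard2021steklov}, so your separation-of-variables computation is a self-contained replacement rather than a reproduction of the paper's proof.

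The details check out:
\begin{itemize}
\item The Neumann condition on $\partial B_r$ forces $b_0=0$ and $b_l^j=\frac{l}{l+n-2}r^{2l+n-2}a_l^j$.
\item Green's formula followed by Parseval on $\partial B_R$ and $\partial B_r$ diagonalizes both energies in the same coefficients with nonnegative weights. This is legitimate because $u\in C^\infty(\overline{A_{r,R}})$ and $h\in C^\infty(\overline{B_r})$.
\item The modewise ratio is $t^{k}(1+c)^2/\bigl[(1+ct^{k})(1-t^{k})\bigr]$, with $k=2l+n-2\ge n$ and $0\le c=\frac{l}{l+n-2}\le 1$.
\end{itemize}
This yields $\int_{B_r}|\nabla h|^2\,dx\le \frac{4t^n}{1-t^n}\int_{A_{r,R}}|\nabla u|^2\,dx$ for every $t=r/R\in(0,1)$. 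That is the lemma with $4$ instead of $5$, with an explicit remainder and no need to let $t\to0$. The mode $u=\phi_1Y_1^j$ shows the best asymptotic constant is $\bigl(\frac{n}{n-1}\bigr)^2$.

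Compared with the bare citation, your route has two advantages. It makes clear exactly where the Neumann condition is used: without it, $u=\rho^{2-n-l}Y_l^j$ gives a ratio of about $\frac{l}{l+n-2}$, not $O(t^n)$. It also spares the reader from checking that a lemma formulated in a homogenisation setting applies verbatim here. The citation buys only brevity.

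There are two harmless slips.
\begin{itemize}
\item Your first heuristic expansion of $\phi_l(R)\phi_l'(R)$ has the wrong coefficient on $t^{2(2l+n-2)}$. It should be $-\frac{l^2}{l+n-2}$, and for $n\ge3$ there is also a $t^{2l+n-2}$ term. The factorized formula you give next is right.
\item The claim that the supremum over $l$ is attained at $l=1$ does not follow from the two bounds you quote just before it. You never need it, since the uniform bound $4t^n/(1-t^n)$ already suffices.
\end{itemize}
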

  Now we present a proof of the main result of this section.
\begin{proof}[Proof of Theorem \ref{thm:convergence}] In the proof, we will use the following notation: let $\varepsilon>0$ be small enough and let us consider $\Omega_{r_\varepsilon}= \Omega_\mathrm{out} \setminus \overline{B_{r_\varepsilon}}$, where $B_{r_\varepsilon}$ is a ball centered at the origin with radius $ r_\varepsilon$, with $0<r_\varepsilon<\varepsilon$, such that $\overline{B}_{r_\varepsilon}\subset \Omega_\mathrm{out}$, and  $r_\varepsilon/\varepsilon\to 0^+$ as $\varepsilon \to 0^+$. From now onwords, we will consider $u_1^{r_\varepsilon}$ as an eigenfunction corresponding to the first non zero Steklov--Neumann eigenvalue $\mu_1(\Omega_{r_\varepsilon})$, such that $\|u_1^{r_\varepsilon}\|_{L^2(\partial \Omega_{out})}=1$.

\textbf{Step 1. (An upper bound for $\mu_1(\Omega_{r_\varepsilon})$)} In this step we will prove that
\begin{equation}\label{eq:uppbound}
    \mu_1(\Omega_{r_\varepsilon})\le \sigma_1(\Omega_{out}).
\end{equation}
   This follows immediately by choosing as a test function in the variational characterization \eqref{characterization} of $\mu_1(\Omega_{r_\varepsilon})$, an eigenfunction corresponding to the first non-trivial Steklov eigenvalue $\sigma_1(\Omega_{out})$, let us say $\tilde{u}_1$. Then, $\tilde{u}_1$ is orthogonal to the constant function on $\partial \Omega_{\mathrm{out}},$ that is
   \begin{align*}
       \int_{\partial \Omega_{\mathrm{out}}} \tilde{u}_1 \, dS =0.
   \end{align*}
    Therefore, we get 
    \begin{align*}
        \mu_1(\Omega_{r_{\varepsilon}})\leq \frac{\displaystyle \int_{\Omega_{r_{\varepsilon}}} |\nabla \tilde{u}_1|^2 \, dV}{\displaystyle\int_{\partial \Omega_{\mathrm{out}}}\tilde{u}_1^2 \,dS}\le\frac{\displaystyle\int_{\Omega_{\mathrm{out}}} |\nabla \tilde{u}_1|^2 \, dV}{\displaystyle\int_{\partial\Omega_{\mathrm{out}}}\tilde{u}_1^2\, dS} =\sigma_1(\Omega_{\mathrm{out}}),
    \end{align*}
\noindent for every $\varepsilon$ small enough. Then, applying Brock inequality (\cite{Bro2001}), we get 
    \begin{align*}
        \mu_1(\Omega_{r_{\varepsilon}})\leq \sigma_1(\Omega_\mathrm{out})\leq \sigma_1(B_R)= \frac{1}{R},
    \end{align*}
    for every $\varepsilon$ small enough, where $B_R$ is the ball having same measure as $\Omega_{\mathrm{out}}.$
     
  \textbf{Step 2. (Convergence of the eigenvalues)}  In the previous step, we have proved that the sequence of strictly positive eigenvalues $ \{\mu_1 (\Omega_{r_{\varepsilon}})\}_{\varepsilon}$ is uniformly bounded for small $\varepsilon >0$. Then, there exist a subsequence, still denoted by $\{\mu_1 (\Omega_{r_{\varepsilon}})\}_{\varepsilon}$, such that
     \begin{align*}
         \lim_{\varepsilon \to 0} \inf \mu_1(\Omega_{r_{\varepsilon}}) = \tilde{\sigma}.
     \end{align*}
Let us now extend by harmonicity $u_1^{r_\varepsilon}$ inside $B_{r_\varepsilon}$, i.e. let us consider
the function $U_1^{r_\varepsilon}\in H^1(\Omega_\mathrm{out})$, defined as
\begin{equation*}
    \begin{cases}
        U_1^{r_\varepsilon}=u_1^{r_\varepsilon} & \text{in}\;\; \overline{\Omega_{r_\varepsilon}}\\
        \Delta U_1^{r_\varepsilon}=0 & \text{in}\;\; B_{r_\varepsilon}.
    \end{cases}
\end{equation*}
We want to prove that $U_1^{r_\varepsilon}$ is equibounded in $L^2(\Omega_{out})$. If we consider the following eigenvalue problem with Robin boundary condition and Robin boundary  parameter $\beta=1$,
\begin{equation*}
\begin{cases}
    \Delta v = \lambda v & \text{in}\;\; \Omega_{out}\\
        \partial_\nu v+v=0 & \text{in}\;\; \partial \Omega_{out},
\end{cases}
\end{equation*}
Then, the first Robin eigenvalue has the following variational characterization
\begin{equation*}
  \lambda_1(\Omega_{out}) = \inf_{0\neq v \in H^1(\Omega_{out})}\frac{\displaystyle\int_{\Omega_{out}}|{\nabla v}|^2\,dx + \int_{\partial \Omega_{out}}v^2\,d\mathcal{H}^{n-1}}{\displaystyle\int_{\Omega_{out}}v^2\,dx}.  
\end{equation*}
Since $U_1^{r_\varepsilon}\in H^1(\Omega_{out})$, we have
\begin{equation*}
    \|U_1^{r_\varepsilon}\|_{L^2(\Omega_{out})}^2\le \lambda_1(\Omega_{out})^{-1}(\|\nabla U_1^{r_\varepsilon}\|_{L^2(\Omega_{out})}^2+\|U_1^{r_\varepsilon}\|_{L^2(\partial \Omega_{out})}^2).
\end{equation*}
By the normalization condition, we know that
\begin{equation*}
    \|\nabla U_1^{r_\varepsilon}\|_{L^2(\Omega_{out})}^2= \|\nabla U_1^{r_\varepsilon}\|_{L^2(B_{r_\varepsilon})}^2+\|\nabla u_1^{r_\varepsilon}\|_{L^2(\Omega_{r_\varepsilon})}^2=\|\nabla U_1^{r_\varepsilon}\|_{L^2(B_{r_\varepsilon})}^2+\mu_1(\Omega_{r_\varepsilon}).
\end{equation*}
Therefore
\begin{equation*}
     \|U_1^{r_\varepsilon}\|^2_{L^2(\Omega_{\mathrm{out}})}\le \lambda_1(\Omega_{out})^{-1}(\|\nabla U_1^{r_\varepsilon}\|_{L^2(B_{r_\varepsilon})}^2+\mu_1(\Omega_{r_\varepsilon})+1).
\end{equation*}
Let us estimate now $\|\nabla U_1^{r_\varepsilon}\|_{L^2(B_{r_\varepsilon})}$. By applying Lemma \ref{lem:estimateharmonicgradient} in the annulus $A_{r_\varepsilon,\varepsilon}$, which is well contained in $\Omega_{r_\varepsilon}$ for $\varepsilon$ small enough, we get
\begin{equation*}
\begin{split}
    \| \nabla U_1^{r_\varepsilon}\|^2_{L^2(B_{r_\varepsilon})}&\le 5\bigg(\frac{r_\varepsilon}{\varepsilon}\bigg)^n\bigg(1+O\bigg(\frac{r_\varepsilon}{\varepsilon}\bigg)^n\bigg)\|\nabla u_1^{r_\varepsilon}\|^2_{L^2(A_{r_\varepsilon,\varepsilon})}  \\
    & \le C\|\nabla u_1^{r_\varepsilon}\|^2_{L^2(A_{r_\varepsilon,\varepsilon})}\le C \|\nabla u_1^{r_\varepsilon}\|^2_{L^2(\Omega_{r_\varepsilon})}= C\mu_1(\Omega_{r_\varepsilon}),
\end{split}
\end{equation*}
where we have used again that $A_{r_\varepsilon,\varepsilon}\subseteq \Omega_{r_\varepsilon}$. This proves that
\begin{equation*}
      \|U_1^{r_\varepsilon}\|^2_{L^2(\Omega_\mathrm{out})}\le \lambda_1(\Omega_{out})^{-1}[(C+1)\mu_1(\Omega_{r_\varepsilon})+1].
\end{equation*}
By the step one, $\mu_1(\Omega_{r_\varepsilon})\le \sigma_1(\Omega_{\mathrm{out}})$ for $\varepsilon$ small enough. This implies that $U_1^{r_\varepsilon}$ is equibounded in $L^2(\Omega_{\mathrm{out}})$ and, therefore, there exists a subsequence still denoted by  $\{U_1^{r_\varepsilon}\}_\varepsilon$ converges strongly in $L^2(\Omega_{\mathrm{out}})$ to some $\overline{u_1} \in H^1(\Omega_{\mathrm{out}})$ and such that $\nabla U_1^{r_\varepsilon} \rightharpoonup \nabla \overline{u_1}$ weakly in $L^2(\Omega_{\mathrm{out}}).$ Moreover, by compactness of the trace operator we have that   $\{U_1^{r_\varepsilon}\}_\varepsilon$   converges strongly in $L^2(\partial\Omega_{\mathrm{out}})$ almost everywhere on  $\partial\Omega_{\mathrm{out}}.$

Now we use a test function $ \phi\in H^1(\Omega_{\mathrm{out}}),$ in the weak formulation of the Steklov-Neumann eigenvalue problem \eqref{weak formulation SN}. 
 Since $U_1^{r_\varepsilon}$ is an eigenfunction corresponding to the eigenvalue $\mu_1(\Omega_{r_\varepsilon}).$ Then,
 \begin{align}\label{passing limit}
     \int_{\Omega_{r_\varepsilon}} \nabla U_1^{r_\varepsilon}\,\, \nabla\phi \, dV= \mu_1(\Omega_{r_\varepsilon}) \int_{\partial \Omega_{\mathrm{out}}} U_1^{r_\varepsilon} \phi \, dS.
 \end{align}
Since $\nabla U_1^{r_\varepsilon}$ converges weakly in $L^2(\Omega_{\mathrm{out}})$  to $\nabla \overline{u_1},$ then

\begin{align*}
    \lim_{\varepsilon \to 0} \int_{\Omega_{r_\varepsilon}}  \nabla U_1^{r_\varepsilon}\,\, \nabla\phi \, dV = \int_{ \Omega_{\mathrm{out}}} \nabla \overline{u_1} \, \nabla \phi \,dV ,
\end{align*}
and since $U_1^{r_\varepsilon}$ converges strongly in $L^2(\partial \Omega_{\mathrm{out}})$, then
\begin{align*}
    \lim_{\varepsilon \to 0} \int_{\partial \Omega_{\mathrm{out}}} U_1^{r_\varepsilon} \phi \, dS = \int_{\partial \Omega_{\mathrm{out}}} \overline{u_1} \phi \, dS.
\end{align*}
Now, passing to the lim inf in  \eqref{passing limit} as $\varepsilon \to 0,$ we get 
 \begin{align} \label{inf limit}
      \int_{ \Omega_{\mathrm{out}}} \nabla \overline{u_1} \, \nabla \phi \,dV =\tilde{\sigma} \int_{\partial \Omega_{\mathrm{out}}} \overline{u_1} \phi \, dS, \quad \text{for all} \quad \phi \in H^1(\Omega_{\mathrm{out}}).
 \end{align}
This means that $\tilde{\sigma}$ belongs to the spectrum of the Dirichlet-to-Neumann operator.
If $\tilde{\sigma}=0,$ then from \eqref{inf limit} we would have $\overline{u_1}$  constant, and the orthogonality condition 
\begin{align*}
    0=\int_{\partial \Omega_\mathrm{out}} U_1^{r_\varepsilon} \to \int_{\partial \Omega_{\mathrm{out}}} \overline{u_1}, \quad\text{as}\quad r_\varepsilon\to 0,
\end{align*}
would imply that $\overline{u_1}=0$ on ${\partial \Omega_{\mathrm{out}}}$. But this is not possible since the normalization condition holds. Therefore, $\tilde{\sigma} >0$ and therefore $\sigma_1({\Omega_{\mathrm{out}}})\leq \tilde{\sigma}.$    
 Then, 
 \begin{align*}
     \sigma_1({\Omega_{\mathrm{out}}})\leq \tilde{\sigma}= \liminf_{\varepsilon \to 0} \mu_1(\Omega_{r_\varepsilon})\leq\limsup_{\varepsilon \to 0} \mu_1(\Omega_{r_\varepsilon})\leq \sigma_1({\Omega_{\mathrm{out}}}).
 \end{align*}
 Hence, we conclude that 
 \begin{align*}
     \lim_{\varepsilon \to 0} \mu_1(\Omega_{r_\varepsilon})= \sigma_1({\Omega_{\mathrm{out}}}).
 \end{align*}

 \textbf{Step 3. (Convergence of eigenfunction)}  
We now recall the convergence properties of the sequence 
$\{U_1^{r_\varepsilon}\}$.  
 We have
\begin{align*}
   U_1^{r_\varepsilon} \longrightarrow \overline{u_1}
    \qquad \text{strongly in } \quad L^{2}(\Omega_{\mathrm{out}}), 
\end{align*}
and 
\begin{align*}
\nabla U_1^{r_\varepsilon} \rightharpoonup 
\nabla \overline{u_1}
    \qquad \text{weakly in } L^{2}(\Omega_{\mathrm{out}}).
\end{align*}
Moreover, we obtained the strong convergence of the traces:
\begin{align*}
  U_1^{r_\varepsilon}\big|_{\partial\Omega_{\mathrm{out}}}
\longrightarrow \overline{u_1}\big|_{\partial\Omega_{\mathrm{out}}}
    \qquad \text{in } L^{2}(\partial\Omega_{\mathrm{out}}).  
\end{align*}

To prove the strong convergence of $ U_1^{r_\varepsilon}$ to $\overline{u_1}$ in $H^1( \Omega_{\mathrm{out}})$, we only need to prove the strong convergence of  $ \nabla U_1^{r_\varepsilon}$ to $\nabla \overline{u_1}$ in $L^2(\Omega_{\mathrm{out}}).$ We consider $U_1^{r_\varepsilon}$ as a test function  in the weak formulation of the classical Steklov eigenvalue problem on $\Omega_\mathrm{out}$, arriving to 
\begin{align}
      \int_{ \Omega_{\mathrm{out}}} \nabla \overline{u_1} \, \nabla U_1^{r_\varepsilon} \,dV =\sigma_1(\Omega_{\mathrm{out}}) \int_{\partial \Omega_{\mathrm{out}}} \overline{u_1}\, U_1^{r_\varepsilon} \, dS.
\end{align}
Then,
\begin{align*}
    \|\nabla U_1^{r_\varepsilon}-\nabla \overline{u_1}\|_{L^2(\Omega_{\mathrm{out}})}
    &=\|\nabla U_1^{r_\varepsilon}\|^2_{L^2( \Omega_{\mathrm{out}})} + \|\nabla \overline{u_1}\|^2_{L^2( \Omega_{\mathrm{out}})} -2 \int_{ \Omega_{\mathrm{out}}} \nabla \overline{u_1} \, \nabla U_1^{r_\varepsilon} \, dV\\
    &= \|\nabla U_1^{r_\varepsilon}\|^2_{L^2( B_{r_\varepsilon})} +\mu_1( \Omega_{r_\varepsilon}) + \sigma_1( \Omega_{\mathrm{out}}) -2 \sigma_1(\Omega_{\mathrm{out}})\int_{ \partial \Omega_{\mathrm{out}}}  \overline{u_1} \,  U_1^{r_\varepsilon} \, dS.\\
\end{align*}
Applying Lemma \ref{lem:estimateharmonicgradient} and considering that $A_{r_\varepsilon,\varepsilon}\subseteq \Omega_{r_\varepsilon}$, we get 
\begin{equation*}
\begin{split}
    \| \nabla U_1^{r_\varepsilon}\|_{L^2(B_{r_\varepsilon})}&\le 5\bigg(\frac{r_\varepsilon}{\varepsilon}\bigg)^n\bigg(1+O\bigg(\frac{r_\varepsilon}{\varepsilon}\bigg)^n\bigg)\|\nabla u_1^{r_\varepsilon}\|_{L^2(A_{r_\varepsilon,\varepsilon})}  \\
    & \le 5\bigg(\frac{r_\varepsilon}{\varepsilon}\bigg)^n\bigg(1+O\bigg(\frac{r_\varepsilon}{\varepsilon}\bigg)^n\bigg)\|\nabla u_1^{r_\varepsilon}\|_{L^2(\Omega_{r_\varepsilon})} \\
     &=5\bigg(\frac{r_\varepsilon}{\varepsilon}\bigg)^n\bigg(1+O\bigg(\frac{r_\varepsilon}{\varepsilon}\bigg)^n\bigg)\mu_1(\Omega_{r_\varepsilon}),
\end{split}
\end{equation*}
which implies
\begin{equation*}
    \lim_{\frac{r_\varepsilon}{\varepsilon}\to 0^+}\| \nabla U_1^{r_\varepsilon}\|_{L^2(B_{r_\varepsilon})}= 0.
\end{equation*}
Furthermore,  $$\mu_1( \Omega_{r_\varepsilon}) \to \sigma_1( \Omega_{\mathrm{out}}), \quad\text{as} \quad \varepsilon\to 0, $$
and the normalization together with the strong convergence of the traces on $\partial \Omega_{\mathrm{out}}$ yields
$$ \int_{ \partial \Omega_{\mathrm{out}}}  \overline{u_1} \,  U_1^{r_\varepsilon} \, dS \to  \int_{ \partial \Omega_{\mathrm{out}}}  \overline{u_1} ^2\, dS=1.$$ 
Hence
\begin{align*}
     \|\nabla U_1^{r_\varepsilon}-\nabla \overline{u_1}\|_{L^2(\Omega_{\mathrm{out}})} \to 0 \quad \text{as} \quad \varepsilon \to 0.
\end{align*}
\end{proof}

The asymptotic convergence established in Theorem~\ref{thm:convergence}
naturally leads to a family of isoperimetric inequalities for domains with
small holes, which we state below.

\begin{cor}\label{thm:isoperimetric1}
Let $\Omega_r = \Omega_{\mathrm{out}} \setminus \overline{B_r}$, where
$\Omega_{\mathrm{out}} \subset \mathbb{R}^n$ is an open, bounded, and
connected set with Lipschitz boundary, and $B_r$ is a ball of radius $r>0$
centered at the origin such that $\overline{B_r} \subset \Omega_{\mathrm{out}}$.
Then, there exists $r_2^M = r_2^M(\Omega_{\mathrm{out}})$, depending only on
the geometry of $\Omega_{\mathrm{out}}$, such that for every
$r \in (0, r_2^M)$,
\begin{equation}\label{eq:isopsecondSD}
\mu_1(\Omega_r) \le \mu_1(A_{r,R_M}),
\end{equation}
where $R_M > 0$ is the radius of the ball having the same measure as
$\Omega_{\mathrm{out}}$. The equality case holds if and only if $\Omega_r = A_{r,R_M}$.
\end{cor}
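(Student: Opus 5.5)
The plan is to combine Theorem~\ref{thm:convergence} with Brock's inequality and a limit computation for the annulus. First, I would apply Theorem~\ref{thm:convergence} twice. Applied to $\Omega_{\mathrm{out}}$, it gives $\mu_1(\Omega_r)\to\sigma_1(\Omega_{\mathrm{out}})$ as $r\to 0$. Applied to $B_{R_M}$, or more directly from the explicit formula of Section~\ref{subsec:radial} with $l=1$, $R_2=R_M$, $R_1=r$,
\[
\mu_1(A_{r,R_M})=\frac{(n-1)\left(\left(\frac{R_M}{r}\right)^{n}-1\right)}{R_M\left((n-1)\left(\frac{R_M}{r}\right)^{n}+1\right)}\longrightarrow \frac{1}{R_M}=\sigma_1(B_{R_M}),
\]
as $r\to0$. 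This expression is moreover increasing as $r$ decreases, so $\mu_1(A_{r,R_M})<1/R_M$ for every $r$.

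Second, I would invoke Brock's inequality \cite{Bro2001}: $\sigma_1(\Omega_{\mathrm{out}})\le\sigma_1(B_{R_M})=1/R_M$, with equality only if $\Omega_{\mathrm{out}}$ is a ball. If $\Omega_{\mathrm{out}}$ is not a ball, the inequality is strict. Set $\delta=1/R_M-\sigma_1(\Omega_{\mathrm{out}})>0$. By the two limits, there is $r_2^M$ such that for $r<r_2^M$ we have $\mu_1(\Omega_r)<\sigma_1(\Omega_{\mathrm{out}})+\delta/2$ and $\mu_1(A_{r,R_M})>1/R_M-\delta/2$. These two bounds give $\mu_1(\Omega_r)<\mu_1(A_{r,R_M})$, strictly. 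The upper estimate on $\mu_1(\Omega_r)$ is in fact automatic from Step~1 of the proof of Theorem~\ref{thm:convergence}, namely $\mu_1(\Omega_r)\le\sigma_1(\Omega_{\mathrm{out}})$. So only the annulus limit is needed, and $r_2^M$ depends only on $\sigma_1(\Omega_{\mathrm{out}})$ and $R_M$, that is, on $\Omega_{\mathrm{out}}$.

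Third, there is the case where $\Omega_{\mathrm{out}}$ is a ball $B_{R_M}(x_0)$. Then $\Omega_r$ is an eccentric annulus, and Theorem~\ref{theorem: optimal}, after a rotation, gives $\mu_1(\Omega_r)\le\mu_1(A_{r,R_M})$ directly for every admissible $r$. Equality holds only for $x_0=0$, using the ``equality iff $d=0$'' conclusion stated at the end of that proof. Combined with the strict inequality from the non-ball case, this yields the characterization of the equality case.

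The main obstacle is the rigidity part of Brock's inequality, namely the strictness for non-balls, which I take from \cite{Bro2001}. A second concern is that equality in Theorem~\ref{theorem: optimal} relies on the strict part of Lemma~\ref{A comparison}(3) and Lemma~\ref{V comparison}(3), which are stated with equality iff $d=0$. The limit of the explicit formula is routine.
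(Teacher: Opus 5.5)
Your proposal is correct and follows the paper's route: the paper writes out only Corollary~\ref{thm:isoperimetric2} and says this corollary is identical with Brock's inequality in place of Weinstock's, i.e.\ strict Brock for non-balls combined with $\mu_1(\Omega_r)\to\sigma_1(\Omega_{\mathrm{out}})$ and $\mu_1(A_{r,R_M})\to 1/R_M$ from Theorem~\ref{thm:convergence}. Your refinements are genuine improvements: Step~1 ($\mu_1(\Omega_r)\le\sigma_1(\Omega_{\mathrm{out}})$ for every admissible $r$) together with the explicit monotone annulus formula removes any need for the convergence part of Theorem~\ref{thm:convergence} and gives the explicit threshold $r_2^M=R_M\bigl(\frac{(n-1)(1-s)}{n-1+s}\bigr)^{1/n}$ with $s=R_M\sigma_1(\Omega_{\mathrm{out}})$, while your appeal to Theorem~\ref{theorem: optimal} for an off-centre ball $B_{R_M}(x_0)$ covers a case the paper's argument mishandles (it treats equality in Brock as forcing $\Omega_r=A_{r,R_M}$, which is false when $x_0\neq 0$).
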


The next result is obtained by replacing the measure constraint with a
perimeter constraint and applying the Weinstock inequality (\cite{BFNT2021}).

\begin{cor}\label{thm:isoperimetric2}
Let $\Omega_r = \Omega_{\mathrm{out}} \setminus \overline{B_r}$, where
$\Omega_{\mathrm{out}} \subset \mathbb{R}^n$ is an open, bounded, and convex
set, and $B_r$ is a ball of radius $r>0$ centered at the origin such that
$\overline{B_r} \subset \Omega_{\mathrm{out}}$.
Then there exists $r_2^P = r_2^P(\Omega_{\mathrm{out}})$, depending only on
the geometry of $\Omega_{\mathrm{out}}$, such that for every
$r \in (0, r_2^P)$,
\begin{equation}\label{eq:isopsecondSDP}
\mu_1(\Omega_r) \le \mu_1(A_{r,R_P}),
\end{equation}
where $R_P > 0$ denotes the radius of the ball having the same perimeter as
$\Omega_{\mathrm{out}}$. The equality case holds if and only if $\Omega_r = A_{r,R_P}$.
\end{cor}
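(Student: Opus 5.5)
The statement to prove is Corollary~\ref{thm:isoperimetric2}. The plan is to combine Theorem~\ref{thm:convergence}, applied to both $\Omega_r$ and the concentric annulus $A_{r,R_P}$, with the Weinstock inequality for convex sets from \cite{BFNT2021}, and then to handle the strict and equality cases separately.

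\textbf{Limits as $r\to 0$.} Theorem~\ref{thm:convergence} applied to $\Omega_{\mathrm{out}}$ gives $\mu_1(\Omega_r)\to\sigma_1(\Omega_{\mathrm{out}})$. Applied to $\Omega_{\mathrm{out}}=B_{R_P}$, it gives $\mu_1(A_{r,R_P})\to\sigma_1(B_{R_P})=1/R_P$. The second limit can also be checked directly from the explicit formula of Subsection~\ref{subsec:radial} with $l=1$, $R_2=R_P$, $R_1=r$:
\[
\mu_1(A_{r,R_P})=\frac{(n-1)\big((R_P/r)^n-1\big)}{R_P\big((n-1)(R_P/r)^n+1\big)}.
\]
This expression increases to $1/R_P$ as $r\to0$, and it is strictly smaller than $1/R_P$ for every $r>0$. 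By the Weinstock inequality in the class of convex sets \cite{BFNT2021}, $\sigma_1(\Omega_{\mathrm{out}})\le \sigma_1(B_{R_P})$, with equality if and only if $\Omega_{\mathrm{out}}$ is a ball of radius $R_P$.

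\textbf{Strict case.} Suppose $\Omega_{\mathrm{out}}$ is not a ball, and set $\delta:=1/R_P-\sigma_1(\Omega_{\mathrm{out}})>0$. By the two convergences above, there is $r_2^P>0$ such that for $r<r_2^P$ we have $\mu_1(\Omega_r)<\sigma_1(\Omega_{\mathrm{out}})+\delta/2$ and $\mu_1(A_{r,R_P})>1/R_P-\delta/2$. This gives the strict inequality \eqref{eq:isopsecondSDP}. Here $r_2^P$ depends only on $\Omega_{\mathrm{out}}$: the inner ball is always centered at the origin, so $\Omega_r$ is determined by $\Omega_{\mathrm{out}}$ and $r$. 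We also need $r_2^P$ smaller than the distance from the origin to $\partial\Omega_{\mathrm{out}}$.

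\textbf{Ball case (main obstacle).} Suppose $\Omega_{\mathrm{out}}=B_{R_P}(x_0)$ is a ball with center $x_0$. Then $\Omega_r$ is an eccentric annulus with $d=|x_0|$. After a rotation, and after translating so that the outer ball sits at $y_d$ and the inner ball at the origin (which does not change the eigenvalue), Theorem~\ref{theorem: optimal} gives $\mu_1(\Omega_r)\le\mu_1(A_{r,R_P})$ for every admissible $r$. The proof of that theorem states equality only when $d=0$, i.e.\ $\Omega_r=A_{r,R_P}$; this equality characterization comes from the strict inequalities in Lemma~\ref{A comparison}(3) and Lemma~\ref{V comparison}(3).

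The delicate point is that the limiting argument alone cannot exclude equality when $\Omega_{\mathrm{out}}$ is a ball. That is why I route this case through Theorem~\ref{theorem: optimal} rather than through the asymptotics. The analogous Corollary~\ref{thm:isoperimetric1} would follow in the same way, using the Brock inequality in place of Weinstock.
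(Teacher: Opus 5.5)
Your argument is correct. When $\Omega_{\mathrm{out}}$ is not a ball, it coincides with the paper's proof. The strict Weinstock inequality produces a gap $\delta>0$. The convergence $\mu_1(A_{r,R_P})\to 1/R_P$ then yields a strict inequality for small $r$, combined either with $\mu_1(\Omega_r)\to\sigma_1(\Omega_{\mathrm{out}})$ or simply with the bound $\mu_1(\Omega_r)\le\sigma_1(\Omega_{\mathrm{out}})$ from Step~1 of the proof of Theorem~\ref{thm:convergence}.

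Where you genuinely depart from the paper is the equality case of Weinstock. The paper writes that if $\Omega_{\mathrm{out}}=B_{R_P}$ then $\Omega_r=A_{r,R_P}$ and ``equality holds trivially''. Conversely, it concludes from equality in Weinstock that $\Omega_{\mathrm{out}}=B_{R_P}$. Both steps tacitly assume the extremal ball is centred at the origin. Weinstock's rigidity only says that $\Omega_{\mathrm{out}}$ is \emph{some} ball of radius $R_P$. Since the hole is pinned at the origin, $\Omega_r$ is then an eccentric annulus, for which $\delta=0$ and the asymptotics decide nothing, as you correctly point out.

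Your reduction to Theorem~\ref{theorem: optimal} handles exactly this configuration; a rotation about the origin is all that is needed, no translation. It gives the inequality for every admissible $r$, strictly when the centre is off the origin, so it also delivers the rigidity statement. One point to make explicit: ``equality iff $d=0$'' appears in the proof of Theorem~\ref{theorem: optimal}, not in its statement. It is nevertheless justified. Lemma~\ref{A comparison}(3), multiplied by the negative coefficient $-R_1^{2n}/n$, makes the numerator inequality strict for $d>0$. Hence $\mu_1(\Omega_d)\le$ (Rayleigh quotient of the test function) $<\mu_1(\Omega_0)$.

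In short, the paper's route is shorter but incomplete for an off-centre ball. Yours costs an appeal to the eccentric-annulus comparison and its computational lemmas, and in exchange proves the full statement, including the equality case.
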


We will give only the proof of Corollary~\ref{thm:isoperimetric2} and we stress that it is just a simple appilication of the Weinstock inequality and limiting arguments that follow from Theorem \ref{thm:convergence}. The proof of Corollary~\ref{thm:isoperimetric1} follows the same arguments, where the Weinstock inequality is replaced by the Brock inequality.

\begin{proof}[Proof of Corollary~\ref{thm:isoperimetric2}]
By the Weinstock inequality, since $\Omega_{\mathrm{out}}$ is convex and has the same perimeter as $B_{R_P}$, we have
\[
\sigma_1(\Omega_{\mathrm{out}}) \le \sigma_1(B_{R_P}),
\]
and the equality holds if and only if $\Omega_{\mathrm{out}} = B_{R_P}$.\\
If $\Omega_{\mathrm{out}} = B_{R_P}$, then $\Omega_r = A_{r,R_P}$ and equality holds trivially. Therefore we can assume that $\Omega_{\mathrm{out}} \neq B_{R_P}$. Then the Weinstock inequality is strict and there exists $\delta > 0$ such that
\[
\sigma_1(\Omega_{\mathrm{out}}) \le \sigma_1(B_{R_P}) - 2\delta.
\]
By Theorem \ref{thm:convergence} we have that
\[
\mu_1(\Omega_r) \to \sigma_1(\Omega_{\mathrm{out}}) \quad \text{as } r \to 0^+.
\]
Hence there exists $r_1=r_1(\Omega_{\mathrm{out}}) > 0$ such that for every $r < r_1$,
\[
\mu_1(\Omega_r) \le \sigma_1(\Omega_{\mathrm{out}}) + \delta.
\]
Therefore,
\[
\mu_1(\Omega_r) \le \sigma_1(B_{R_P}) - \delta.
\]
Always by Theorem \ref{thm:convergence}, we have that
\[
\mu_1(A_{r,R_P}) \to \sigma_1(B_{R_P}) \quad \text{as } r \to 0^+,
\]
and so there exists $r_2=r_2(\Omega_\mathrm{out}) > 0$ such that for every $r < r_2$,
\[
\sigma_1(B_{R_P}) - \delta\le \mu_1(A_{r,R_P})  .
\]
Hence, for every $r < \min\{r_1,r_2\}=r^P_2$, we obtain
\[
\mu_1(\Omega_r) \le \sigma_1(B_{R_P}) - \delta 
\le \mu_1(A_{r,R_P}),
\]
which proves the desired inequality.
Conversely, if equality holds, then equality must hold in the Weinstock inequality, which implies $\Omega_{\mathrm{out}} = B_{R_P}$.
\end{proof}

\begin{remark}
In dimension two, Corollary~\ref{thm:isoperimetric2} remains valid within
the class of planar simply connected domains (see \cite{Wein1954}).
\end{remark}

\section{About the nodal domains}\label{sec:6}
Let $\Omega= \Omega_\mathrm{out}\setminus\overline{B_r}$  and let us denote by $V_k$ the eigenspace corresponding to $\mu_k(\Omega)$. Before stating the result, we introduce the notion of a nodal domain,
\begin{definition}
    We will call $\Omega_j$ a nodal domain of $u\in V_k$ in $\Omega$, a connected component of the set $\{x\in \Omega: u(x)\neq 0\}$. In particular, we will denote by
    \begin{equation}
        \Delta_k = \max_{v \in V_k} \sharp \{\text{nodal domains of v} \}.
    \end{equation}
\end{definition}

In this section, we study the number of nodal domains of the eigenfunctions corresponding to the first nonzero Steklov--Neumann eigenvalue. 
In the radial case (see section \ref{subsec:radial}), we recall the explicit expression of the eigenfunctions given by
\begin{equation*}
    u^i_n(x)= \left(r+\frac{R_1^n}{(n-1)r^{n-1}}\right)\frac{x_i}{r}.
 \end{equation*}
It is immediately evident that for every $i=1,...,n$, the eigenfunctions have exactly two nodal domains and that the nodal line is the set where $x_i=0$. This observation naturally raises the question of whether the same
property holds for a general doubly connected domain $\Omega$.
The answer to this question is affermative and it is given in the proof of Theorem \ref{prop:nodaldomains}.\\

 The arguments of the proof follow the idea contained in \cite{AM1994, KS1969}, with  modifications due to the  presence of a hole in $\Omega_\mathrm{out}$.

\begin{proof} [Proof of Theorem \ref{prop:nodaldomains}]

    By contradiction, let us assume that there exists $ u \in V_{1}$ with $\Delta$ nodal domains $\Omega_1,\dots,\Omega_\Delta$, such that
    \begin{equation*}
        \Delta \ge 2+m_0=3,
    \end{equation*}
    where $m_0=1$ is the multiplicity of the trivial eigenvalue $\mu_0(\Omega)$.
    Let $u_0=1$ the basis of the eigenspace $W_0$ and let us consider the following function
    \begin{equation*}
        v = 
        \begin{cases}
            \displaystyle \sum_{l=1}^{\Delta -1}\alpha_l \cdot u \chi_{\Omega_l} & \text{in} \,\, \Omega\setminus \overline{\Omega}_\Delta\\
            \displaystyle 0 & $\text{in}$\,\, \Omega_\Delta,
        \end{cases}
    \end{equation*}
    where $\chi_{\Omega_l}(\cdot)$ is the characteristic function of the nodal domain $\Omega_l$ and $\alpha_l \in \mathbb{R}$ not all zero, for all $l=0,\dots, \Delta-1$. In particular, we can choose $\alpha_1,\dots,\alpha_{\Delta-1}$ in such a way that
    \begin{equation*}
        \int_{\partial \Omega_0} v u_0\,d\mathcal{H}^{n-1}=0.
    \end{equation*}
     This makes $v$ an admissible function for $\mu_1$ and hence, for every $l=0,\dots, \Delta -1$, an integration by parts gives
    \begin{equation*}
    \begin{split}
        \int_{\Omega_l} |\nabla v|^2\,dx &= \int_{\partial \Omega_l} \frac{\partial v}{\partial \nu}v \,d\mathcal{H}^{n-1}\\
        &=\alpha_l^2 \int_{\partial \Omega_l\cap \partial \Omega_0} \frac{\partial u}{\partial \nu}u \,d\mathcal{H}^{n-1} = \alpha_l^2 \mu_1  \int_{\partial \Omega_l\cap \partial \Omega_0} u^2 \,d\mathcal{H}^{n-1}.
    \end{split}
    \end{equation*}
    Summing up to $\Delta -1$, we get
    \begin{equation*}
        \int_{\Omega}|\nabla v|^2 = \mu_1\int_{\partial \Omega_0}v^2\,d\mathcal{H}^{n-1}.
    \end{equation*}
    Hence, $v$ is an eigenfunction associated to $\mu_1$. Since $v\equiv 0$ in $\Omega_\Delta$ and it is harmonic, it must be $v\equiv 0$ in $\Omega$, which is a contradiction. Therefore, we must have
     \begin{equation} \label{nodal ineq}
        \Delta_{1}\le 1+ m_0=2.
    \end{equation}
Next, let $v$ be an eigenfunction associated to $\mu_1 (\Omega)$. Since it holds 
    $$ \int_{\partial \Omega_0}v \;d\mathcal{H}^{n-1}=0,$$
    and by the regularity of $v$ ($v$ is a harmonic function, hence $C^\infty(\Omega)\cap C(\Bar{\Omega})$), we have that 
     $v$ is sign changing, which means that $\Delta_2 \ge 2$. Then applying the inequality \eqref{nodal ineq}, we have \begin{equation*} \Delta_2 = 2.
    \end{equation*}
\end{proof}



\begin{remark}
A notable difference between the Steklov--Dirichlet and the
Steklov--Neumann problems concern the structure of the nodal domains of
the associated eigenfunctions. In the Steklov--Dirichlet case, the
Dirichlet condition imposed on the inner boundary forces the eigenfunctions
to vanish there, which strongly influences both their sign distribution
and the geometry of their nodal sets. As a consequence, the nodal lines
tend to be anchored to the Dirichlet boundary, and their behavior is
closely tied to the topology of the perforation (see \cite[Proposition 2.5]{sannipoli2025estimates}).

By contrast, in the Steklov--Neumann problem, the Neumann condition on the
inner boundary allows eigenfunctions to attain non-zero values on
$\Gamma_1$. This additional flexibility leads to a markedly different nodal
structure, as nodal sets are no longer constrained to intersect the inner
boundary. The analysis of nodal domains therefore requires distinct
arguments, and classical techniques developed for the Steklov--Dirichlet
problem cannot be applied directly.
\end{remark}

\section*{Acknowledgements}
This work has been partially supported by GNAMPA group of INdAM. \\R. Sannipoli was supported by the grant no. 26-21940S
of the Czech Science Foundation. S. Basak is supported by the University Grants Commission, India. S. Verma acknowledges the project grant provided by SERB-SRG sanction order No. SRG/2022/002196.
G. Paoli was supported by "INdAM - GNAMPA Project", codice CUP E5324001950001
    and  by the Project PRIN 2022 PNRR:  "A sustainable and trusted Transfer Learning platform for Edge Intelligence (STRUDEL)", CUP E53D23016390001, in the framework of European Union - Next Generation EU program

\section*{Conflicts of interest and data availability statement}
The authors declare that there is no conflict of interest. Data sharing not applicable to this article as no datasets were generated or analyzed during the current study.

\Addresses
\bibliographystyle{plain}
\bibliography{Ref}

\end{document}